\newcommand{\Ran}{\operatorname{Ran}}
\newcommand{\dist}{\operatorname{dist}}
   \theoremstyle{plain}
   \newtheorem{thm}{Theorem}[section]
   \newtheorem{prop}[thm]{Proposition}
   \newtheorem{lem}[thm]{Lemma}
   \theoremstyle{definition}
   \newtheorem{example}{Example}
	\newtheorem*{question*}{Question}
   \theoremstyle{remark}
\author{V. Manuilov}
\date{}
\address{Moscow Center for Fundamental and Applied Mathematics, Moscow State University,
Leninskie Gory 1, Moscow, 
119991, Russia}
\email{manuilov@mech.math.msu.su}
\thanks{The author acknowledges support by the RNF grant 24-11-00124.}
\title{On large submodules in Hilbert $C^*$-modules}
\begin{document}

\begin{abstract}
We consider several natural ways of expressing the idea that a one-sided ideal in a $C^*$-algebra (or a submodule in a Hilbert $C^*$-module) is large, and show that they differ, unlike the case of two-sided ideals in $C^*$-algebras. We then show how these different notions, for ideals and for submodules, are related. We also study some permanence properties for these notions. Finally, we use essential right ideals to extend the inner product on a Hilbert $C^*$-module to a part of the dual module.

\end{abstract}

\maketitle

\section{Introduction}


For a closed right ideal $J$ in a $C^*$-algebra $A$ there are at least two different ways to express the idea that $J\subset A$ is large. 

$J$ is {\it essential} if
\begin{itemize}
\item[$(i1)$]
$J\cap I\neq 0$ for any non-zero right ideal $I\subset A$. 
\end{itemize}

This may seem too algebraic and not suited for ideals in $C^*$-algebras as it uses general (non-closed) ideals, but the following equivalent property (see e.g. \cite{Lam}) has no such algebraic flavor:
\begin{itemize}
\item[$(i1')$]
for each non-zero $a\in A$ there exists $b\in A$ such that $ab\in J$ and $ab\neq 0$.  
\end{itemize}


Using only closed submodules, one may define a similar notion:  

a closed right ideal $J$ is {\it topologically essential} if
\begin{itemize}
\item[$(i2)$]
$J\cap I\neq 0$ for any non-zero closed right ideal $I\subset A$.  
\end{itemize}

Clearly, $(i1)\Rightarrow(i2)$. Here we do not address the question, whether $(i1)$ and $(i2)$ are equivalent.

\smallskip

Another way to express largeness of a closed right ideal $J$ is:

$J$ is {\it thick} if
\begin{itemize} 
\item[$(i3)$]
$aJ=0$ implies $a=0$ for $a\in A$. 
\end{itemize} 

As far as we know, there is no standard name for ideals satisfying $(i3)$. A similar property was considered for Hilbert $C^*$-modules over $C^*$-algebras in \cite{M-MN}, where such submodules were called {\it thick} submodules. We keep here this terminology for ideals as well. 

Pay attention to the order of multiplication in $(i3)$. This property may be replaced by 
\begin{itemize} 
\item[$(i4)$]
$Ja=0$ implies $a=0$ for $a\in A$. 
\end{itemize}
The property $(i4)$ differs from $(i3)$ and is studied in \cite{Kaneda-Paulsen}, see Prop. 4.2. Our choice of the order of multiplication comes from the Hilbert $C^*$-module theory: if we consider a right ideal $J$ as a Hilbert $C^*$-module over $A$ with the $A$-valued inner product $\langle x,y\rangle=x^*y$, $x,y\in J$, then $aJ=0$ is equivalent to the condition that the orthogonal complement of $J$ in $A$ is zero, while $Ja=0$ has no interpretation in terms of the inner product.

The properties $(i1)$ and $(i3)$ can be considered also for {\it two-sided} ideals, and for them $(i1)$ and $(i3)$ are known to be equivalent (see, e.g., the comment preceding Theorem 3.1.8 in \cite{Murphy}). This equivalence is used in the theory of essential extensions of $C^*$-algebras. It is a common belief that, for one-sided ideals in $C^*$-algebras, $(i1)$ and $(i3)$ are not equivalent, but to our best knowledge there was no example showing the difference between these two properties.

Our first aim is to show that $(i1)$ (and $(i2)$) implies $(i3)$, but not vice versae. The implication $(i1)\Rightarrow(i3)$ is easy and known to specialists, and we recall it in Proposition \ref{Implication}. An example showing that $(i3)$ does not imply $(i1)$ is more elaborate. We provide it in the next section. This example is inspired by a similar Example 4 in \cite{Akemann-Bice}, and by \cite{Kaad-Skeide}.

Our next aim is to consider, for submodules of Hilbert $C^*$-modules, the conditions similar to the above conditions of largeness for ideals. We define properties $(m1)$--$(m5)$ similar to $(i1)$--$(i3)$ for submodules and show how some of them are related. In particular, we introduce the condition $(m5)$ as the uniqueness for extending bounded functionals from a submodule to the larger module, and show that $(m5)$ sits strictly between essentiality $(m1)$ and thickness $(m3)$. This sheds a new light on the examples constructed in \cite{Kaad-Skeide}, and later in \cite{M-MN}. A modification of our example shows that thickness behaves not so nicely as essentiality: the intersection of two thick submodules need not be thick, and for submodules $K\subset M\subset N$, $K$ need not be thick in $N$ when it is thick in $M$ and $M$ is thick in $N$.

We also compare some of the properties for a submodule $M$ in a Hilbert $C^*$-module $N$ with their counterparts for a certain ideal $J_M$ in the $C^*$-algebra $\mathbb K(N)$ of compact operators on $N$, related to $M$, and show that $(m1)$ (resp., $(m3)$) for $M$ in $N$ is equivalent to $(i1)$ (resp., $(i3)$) for $J_M$ in $\mathbb K(N)$.

Finally, we use essential right ideals to the problem of extending the inner product on a Hilbert $C^*$-module to a larger module. It is known that, for a Hilbert $C^*$-module $M$ over a $C^*$-algebra $A$, the dual module $M'$ need not be a Hilbert $C^*$-module, i.e. the $A$-valued inner product on $M$ does not extend to an inner product on $M'$. It is also known that it extends to $M'$ for any Hilbert $C^*$-module over any $W^*$-algebra \cite{Paschke}, and it extends to $M'$ for any Hilbert $C^*$-module over $A$ if and only if $A$ is monotone complete \cite{Frank}. On the other hand, there is an inclusion $M\subset M''\subset M'$, i.e. the second dual module $M''$ sits between $M$ and $M'$, and it was shown in \cite{Paschke} that the inner product on $M$ naturally extends to that on $M''$ for any $C^*$-algebra $A$. This may be useful, but if $M$ is reflexive, i.e. if $M''=M$, this is of no interest. Another case when a partial extension of the inner product is possible is the case when $M$ is an ideal in a $C^*$-algebra $A$, then, by trivial reason, the inner product extends from $M$ to $A$, but, generally, not further to $M'$, and if $M$ is essential then this extension is unique. In these cases, the extended inner product induces the norm equal to the norm in $M'$, and we are interested only in such extensions. For a general $C^*$-algebra $A$ there is no hope to extend the inner product to $M'$, and the question arises:

\begin{question*}
How far in $M'$ can the inner product be extended from $M$?

\end{question*}

We give a partial answer to this question by using essential right ideals of $A$ to construct a Hilbert $C^*$-submodule of $M'$, whereto the inner product can be extended.

Recall basic definitions related to Hilbert $C^*$-modules. For a $C^*$-algebra $A$, let $M$ be a right $A$-module with a compatible structure of a linear space, equipped with a sesquilinear map $\langle\cdot,\cdot\rangle:M\times M\to A$ such that
\begin{itemize}
\item
$\langle n,ma\rangle=\langle n,m\rangle a$ for any $n,m\in M$, $a\in A$;
\item
$\langle n,m\rangle=\langle m,n\rangle^*$ for any $n,m\in M$;
\item
$\langle m,m\rangle$ is positive for any $m\in M$, and if $\langle m,m\rangle=0$ then $m=0$. 
\end{itemize} 
Then $M$ is a pre-Hilbert $C^*$-module. If it is complete with respect to the norm given by $\|m\|^2=\|\langle m,m\rangle\|$ then $M$ is a Hilbert $C^*$-module.

A bounded anti-linear map $f:M\to A$ is a functional on $M$ if it is anti-$A$-linear, i.e. if $f(ma)=a^*f(m)$ for any $a\in A$, $m\in M$. The set of all functionals forms a Banach space which is also a right $A$-module with the action of $A$ given by $(fa)(m)=f(m)a$, $a\in A$, $m\in M$. The map $m\mapsto \langle \cdot,m\rangle$, $m\in M$, defines an isometric inclusion $j:M\subset M'$, but the inner product generally does not extend to $M'$, so the latter need not be a Hilbert $C^*$-module. Iterating, one gets the second dual module $M''$, which turns out to be a Hilbert $C^*$-module \cite{Paschke}.  More details on Hilbert $C^*$-modules can be found in e.g. \cite{Lance}, \cite{MT}.

\section{$(i2)$ is strictly stronger than $(i3)$}

\begin{prop}\label{Implication}
If a closed right ideal $J$ in a $C^*$-algebra $A$ is topologically essential then it is thick, i.e.  $(i2)\Rightarrow(i3)$.
\end{prop}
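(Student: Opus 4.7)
The plan is to prove the contrapositive: assuming $J$ is not thick, I will produce a non-zero closed right ideal $I$ with $I \cap J = 0$, contradicting topological essentiality.

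So suppose there exists $a \in A$, $a \neq 0$, with $aJ = 0$. The natural candidate for a closed right ideal that ``witnesses'' this failure is
\[
I := \overline{a^* A},
\]
the closed right ideal generated by $a^*$. It is non-zero, since if $(e_\lambda)$ is an approximate unit of $A$, then $a^* e_\lambda \to a^* \neq 0$, so $a^* \in I$.

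The heart of the proof is then to verify $I \cap J = \{0\}$. Take $x \in I \cap J$. On the one hand, since $x \in J$ and $aJ = 0$, we have $ax = 0$. On the other hand, since $x \in \overline{a^* A}$, we may write $x = \lim_n a^* b_n$ for a sequence $(b_n) \subset A$. Using continuity of left multiplication by $x^*$ and the identity $x^* a^* = (ax)^* = 0$, we compute
\[
x^* x \;=\; x^* \lim_n a^* b_n \;=\; \lim_n x^* a^* b_n \;=\; \lim_n (ax)^* b_n \;=\; 0,
\]
and hence $x = 0$. Thus $I \cap J = \{0\}$, contradicting $(i2)$.

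There is essentially no obstacle beyond selecting the right ideal $I$: the choice $I = \overline{a^* A}$ (rather than, say, $\overline{aA}$) is what makes the adjoint trick $x^* a^* = (ax)^*$ work, and this is where the asymmetry in the definition of thickness noted in the introduction (condition $(i3)$ versus $(i4)$) plays its role. The argument incidentally shows the slightly stronger implication $(i2) \Rightarrow (i3)$ directly, without needing the algebraic condition $(i1)$.
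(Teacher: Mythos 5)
Your proof is correct and is essentially the paper's own argument: the paper also forms the closed right ideal generated by the adjoint of the annihilating element (it just renames $a^*$ as $a$ at the outset) and kills any $x\in I\cap J$ via the same computation $x^*x=\lim_n x^*a^*b_n=\lim_n(ax)^*b_n=0$. The only cosmetic difference is that you phrase it as a contrapositive and explicitly check $I\neq 0$ with an approximate unit, which the paper leaves implicit.
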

\begin{proof}
Let $J\subset A$.
Suppose the contrary: $J\subset A$ is essential, but $aJ=0$ does not imply $a=0$. Then there exists a non-zero $a\in A$ such that $a^*x=0$ for any $x\in J$. Let $I=\overline{aA}$ be the closure of the right ideal of all elements of the form $ab$, $b\in A$. As $J$ is topologically essential, $J\cap I\neq 0$, i.e. there exists $x\in J$ and a family $b_n\in A$, $n\in\mathbb N$, such that $x\neq 0$ and $x=\lim_{n\to\infty}ab_n$. Then, as $(a^*x)^*=x^*a=0$, we have
$$
x^*x=\lim_{n\to\infty}x^*ab_n=\lim_{n\to\infty}(x^*a)b_n=0, 
$$
hence $x=0$. The contradiction proves the claim. 
\end{proof}

Now we pass to the example showing that $(i3)$ does not imply $(i1)$, and even $(i2)$.

\smallskip

Let $X=[0,1]$, and let $\{t_i\}_{i\in\mathbb N}$ be a countable dense subset (e.g. the rational points). Let $\mathbb K$ denote the $C^*$-algebra of compact operators on a separable Hilbert space $H$, equipped with a fixed orthonormal basis $\{e_i\}_{i\in\mathbb N}$. Let $A=C(X,\mathbb K)$ be the $C^*$-algebra of continuous $\mathbb K$-valued functions on $X$.

Let $f:X\to \mathbb K$ be the function defined by 
$$
f(t)=\sum\nolimits_{i\in\mathbb N}\frac{1}{2^i}\chi_{[0,t_i]}(t)p_i,\quad t\in[0,1], 
$$
where $\chi_E$ denotes the characteristic function of the subset $E\subset X$ and $p_i$ denotes the rank one projection onto the vector $e_i$. Note that $f$ is continuous at irrational points and is not continuous at rational points. Set 
$$
J=\{x\in A: fx\in A\}, 
$$
i.e. $J$ consists of $\mathbb K$-valued functions $x$ that make the product $fx$ continuous. Clearly, $J$ is a right ideal in $A$ and is closed.

This ideal can be described in more detail. After we fix the basis $\{e_i\}$, a compact operator $x$ can be viewed as an infinite matrix, $x=(x_{ij})_{i,j\in\mathbb N}$, where $x_{ij}=(e_i,xe_j)$. So, a $\mathbb K$-valued function can be viewed as an infinite matrix with continuous functions as entries. Let $x\in J$, $x=(x_{ij})_{i,j\in\mathbb N}$, where each $x_{ij}\in C(X)$. 

\begin{lem}\label{lines}
The ideal $J$ consists of matrices such that the functions  $x_{ij}$ vanish at $r_i$ for any $i,j\in\mathbb N$.  

\end{lem}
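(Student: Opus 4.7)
The plan is to translate the definition of $J$ into an entry-wise condition on $x$, by combining the diagonal structure of $f$ with a uniform convergence estimate. Since each $p_i$ is the rank one projection onto $e_i$, the operator $f(t)$ is diagonal with $(k,k)$ entry $g_k(t):=\frac{1}{2^k}\chi_{[0,t_k]}(t)$; consequently the $(k,j)$ matrix entry of $fx$ at the point $t$ is just $g_k(t)x_{kj}(t)$.

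For the necessity direction, I would observe that if $fx$ lies in $A$ then each of its matrix coefficients $t\mapsto (e_k,(fx)(t)e_j)=g_k(t)x_{kj}(t)$ is continuous on $[0,1]$, because matrix coefficients are dominated by the operator norm. The function $g_k$ has a jump of size $2^{-k}$ at $t=t_k$ and is continuous elsewhere, so continuity of the product $g_k x_{kj}$ at $t_k$ forces $x_{kj}(t_k)=0$. (I read the $r_i$ appearing in the statement as the chosen dense sequence $t_i$.)

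For sufficiency, suppose $x_{kj}(t_k)=0$ for every pair $k,j$, equivalently $p_k x(t_k)=0$ for every $k$. I would decompose $fx=\sum_{k}g_k\,p_kx$ and check continuity in two steps. First, each summand $t\mapsto g_k(t)p_kx(t)$ is norm continuous on $[0,1]$: off $t_k$ there is nothing to check, and at $t_k$ the factor $p_kx(t)$ tends in norm to $0$, which kills the jump of $g_k$. Second, the series converges to $fx$ uniformly in $t$: since the projections $p_k$ are pairwise orthogonal, $\bigl\|\sum_{k>N}g_k(t)p_k\bigr\|=\sup_{k>N}g_k(t)\le 2^{-(N+1)}$, whence $\bigl\|\sum_{k>N}g_k(t)p_kx(t)\bigr\|\le 2^{-(N+1)}\|x\|_\infty$ uniformly in $t$. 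A uniform limit of continuous $\mathbb{K}$-valued functions is continuous, so $fx\in A$ and therefore $x\in J$.

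The only genuine obstacle I anticipate is conceptual rather than computational: entrywise continuity of a $\mathbb{K}$-valued function is strictly weaker than norm continuity, so the sufficiency direction really does require a uniform tail bound. The geometric weights $\frac{1}{2^i}$ together with the orthogonality of the $p_i$ are exactly what provides this bound and upgrades the entrywise vanishing condition at the $t_i$ to honest norm continuity of $fx$.
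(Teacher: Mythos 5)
Your proof is correct and follows essentially the same route as the paper's (which is a one-sentence sketch: the $i$-th row of $x$ must vanish at $t_i$ because that is the only discontinuity point of $\chi_{[0,t_i]}p_i$, and this vanishing guarantees continuity); you correctly read the $r_i$ in the statement as the $t_i$ of the construction. Your write-up supplies the two details the paper leaves implicit — that entrywise continuity alone is not enough, and that the geometric weights $2^{-i}$ plus orthogonality of the $p_i$ give the uniform tail bound upgrading rowwise continuity to norm continuity of $fx$ — which is exactly the right thing to make the sufficiency direction airtight.
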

\begin{proof}
As $\chi_{[0,t_i]}p_i$ is discontinuous only at $t=t_i$, the $i$-th row must vanish at this point, which guarantees continuity. 
\end{proof}

\begin{lem}
$J$ is thick.

\end{lem}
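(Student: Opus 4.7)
The plan is to argue by contradiction: assume $a\in A$ is non-zero with $aJ=0$, and construct an explicit $x\in J$ for which $ax\neq0$. The key observation is that the defining condition of $J$, as characterized by Lemma \ref{lines}, forces only row-wise vanishing at a countable set of points, so we have tremendous freedom in choosing values of $x$ at any single irrational point.

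First I would locate a good test point. Since $a$ is continuous, the set $V=\{t\in X:a(t)\neq0\}$ is open and non-empty. Because $\{t_i\}_{i\in\mathbb N}$ is countable while $V$ is uncountable, I can choose $t_0\in V$ with $t_0\notin\{t_i\}$. Since $a(t_0)\in\mathbb K$ is non-zero as an operator, there exists some index $l$ with $a(t_0)e_l\neq0$; by construction $t_0\neq t_l$.

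Next I would construct the witness. Pick any continuous scalar function $g:X\to\mathbb C$ with $g(t_0)=1$ and $g(t_l)=0$ (possible since $t_0\neq t_l$) and define $x(t)=g(t)p_l\in A$. The matrix of $x$ has a single non-vanishing entry $x_{ll}(t)=g(t)$, which vanishes at $t_l$, so all rows of $x$ vanish at the required points and Lemma \ref{lines} gives $x\in J$. Now $(ax)(t_0)=g(t_0)a(t_0)p_l=a(t_0)p_l$, and this operator sends $e_l$ to $a(t_0)e_l\neq0$, so $ax\neq0$, contradicting $aJ=0$.

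The argument is essentially soft: the only delicate step is the countability/uncountability trick that lets us avoid the bad set $\{t_i\}$. Once we pick $t_0$ outside this set, we can sacrifice the single point $t_l$ in the function $g$ to land in $J$ while still keeping $x(t_0)$ effective against $a(t_0)$. I do not anticipate any serious obstacle.
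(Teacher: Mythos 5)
Your proof is correct and follows essentially the same route as the paper's: pick $t_0$ with $a(t_0)\neq 0$ away from the countable set $\{t_i\}$, then test $a$ against $x=g\,p_l$ (the paper writes $e_{jj}f$, which is the same element) with $g$ vanishing at $t_l$ but not at $t_0$, using Lemma \ref{lines} to place $x$ in $J$. No gaps.
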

\begin{proof}
Suppose that there exists a non-zero $a\in A$ such that $ax=0$ for any $x\in J$. Suppose that $a_{ij}(t_0)\neq 0$ for some $i,j\in\mathbb N$ and for some $t_0\in X$. As $a_{ij}$ is continuous, we may assume without loss of generality that $t_0$ differs from all $t_i$, $i\in\mathbb N$. Let $e_{jk}$ denote the matrix unit in $\mathbb K$ at the intersection of $j$-th row and $k$-th column, considered as a constant function on $X$. Let $f\in C(X)$ be a function such that $f(t_0)\neq 0$, $f(t_j)=0$. Set $x(t)=e_{jj}f(t)$. Then $x\in J$ and $(ax)_{ij}(t_0)=a_{ij}(t_0)f(t_0)\neq 0$, which contradicts $ax=0$. Thus, the assumption $a\neq 0$ was wrong, and $a=0$, i.e. $J$ is thick. 
\end{proof}

\begin{lem}
$J$ is not topologically essential. 

\end{lem}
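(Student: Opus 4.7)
My plan is to produce a nonzero closed right ideal $I\subset A$ whose intersection with $J$ is trivial, thereby contradicting topological essentiality. The ideal will be principal, $I=\overline{yA}$, for a single carefully chosen element $y\in A$. The design of $y$ exploits the fact that Lemma~\ref{lines} expresses membership in $J$ as a \emph{row-vanishing} condition: only the $i$-th row is killed at $t_i$. I will make $y$ a constant rank-one operator whose range is a line $\mathbb C u$ with $u$ having every coordinate nonzero; the row-vanishing constraints at all the $t_i$ will then collapse to pointwise vanishing of an auxiliary continuous function on a dense set of points.

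Concretely, set $u=\sum_{k\in\mathbb N}\tfrac1k e_k\in H$ and let $y\in A$ be the constant function $y(t)\equiv ue_1^*$, the rank-one operator $h\mapsto u\langle e_1,h\rangle$. Then $y\neq 0$, and an approximate unit of $A$ shows $y\in\overline{yA}$, so this closed right ideal is nontrivial. For any $a\in A$ one computes $(ya)(t)=u\,\xi_a(t)$, where $\xi_a(t):=e_1^*a(t)\in H^*$ is the first row of $a(t)$, a norm-continuous function of $t$. Using the identity $\|u\xi\|_{\mathbb K}=\|u\|\cdot\|\xi\|_{H^*}$ for rank-one operators, the norm-closure $\overline{yA}$ parametrizes as $\{z:z(t)=u\eta(t),\ \eta\in C(X,H^*)\}$; indeed, the inverse assignment $z\mapsto\eta=\|u\|^{-2}u^*z$ is continuous, so each $z\in\overline{yA}$ determines a unique continuous $\eta\colon X\to H^*$ with $z(t)=u\eta(t)$.

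To conclude, suppose such a $z=u\eta(\cdot)$ also lies in $J$. By Lemma~\ref{lines}, the $i$-th row of $z(t_i)$ vanishes; but that row equals $\langle e_i,u\rangle\eta(t_i)=\tfrac1i\eta(t_i)$, forcing $\eta(t_i)=0$ for every $i\in\mathbb N$. Continuity of $\eta$ together with density of $\{t_i\}$ in $X$ then gives $\eta\equiv 0$ and hence $z=0$. Thus $\overline{yA}\cap J=\{0\}$ while $y\in\overline{yA}\setminus\{0\}$, contradicting topological essentiality of $J$.

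The only technical point I foresee is the second paragraph: verifying that norm-limits of elements $ya_n$ retain the form $u\eta$ with $\eta$ genuinely \emph{continuous}. This is handled cleanly by the rank-one structure of $y$ and the explicit recovery formula $\eta=\|u\|^{-2}u^*z$; every other step is then forced by Lemma~\ref{lines} and the density of $\{t_i\}$.
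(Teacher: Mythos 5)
Your proof is correct and follows essentially the same route as the paper: both construct a principal closed right ideal generated by a constant rank-one operator whose range vector has all coordinates nonzero, so that rows of its elements are proportional, the $J$-condition at each $t_i$ then kills every row at $t_i$, and density of $\{t_i\}$ forces the element to vanish. The only cosmetic difference is that the paper takes the generator to be the projection $\xi\xi^*$ (so $qA$ is automatically closed), whereas your non-self-adjoint generator $ue_1^*$ requires the extra — and correctly supplied — recovery argument $\eta=\|u\|^{-2}u^*z$ to describe the closure.
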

\begin{proof}
We will construct a non-zero (closed) ideal $I$ such that $J\cap I=0$. To this end, let $\xi\in H$ be a unit vector such that  its coordinates $\xi_i$ differ from zero for each $i\in\mathbb N$, and let $q\in\mathbb K$ be the projection onto $\xi$, which we consider as a constant function on $X$. Then $q_{ij}=\xi_i\xi_j$. Set $I=qA$. If $x\in I$ then it has the form $x_{ij}=\sum_{k\in\mathbb N}\xi_i\xi_ka_{kj}$ for some $a\in A$, hence $\frac{x_{ij}}{\xi_i}=\frac{x_{lj}}{\xi_l}$ for any $i,l,j\in\mathbb N$, i.e. the rows of the matrix $(x_{ij})$ are proportional. As a corollary, zeroes for different rows are the same. 

Let $y\in J\cap I$. At $t=t_k$ we have $\frac{y_{ij}(t_k)}{\xi_i}=\frac{y_{kj}(t_k)}{\xi_k}$ for any $i,j\in\mathbb N$, as $y\in I$. As $y\in J$, we have $y_{kj}(t_k)=0$ for any $k,j\in\mathbb N$. Hence, $y_{ij}(t_k)=0$ for any $i,j,k\in\mathbb N$. As $y$ is continuous and vanishes on the dense subset $\cup_{k\in\mathbb N}t_k$, it vanishes on the whole $X$.      
\end{proof}

\section{Intersection of thick ideals}

Note that the intersection of two (topologically) essential closed right ideals, $J$ and $I$, is (topologically) essential by a very simple reason: $(J\cap I)\cap K=J\cap(I\cap K)\neq 0$ for any non-zero closed right ideal $K$. By a modification of our example, we shall show that this is not the case for thick closed right ideals.


Let $J\subset A$ be the ideal constructed in the previous section. Let $b\in \mathbb K$ be the operator given by the matrix with the entries $(b_{ij})_{i,j=1}^\infty$ given by $b_{i1}=\eta_i$ and $b_{ij}=0$ for $j>1$, where 
\begin{equation}\label{norm}
\sum_{i=1}^\infty|\eta_i|^2<1/2
\end{equation} 
and $\eta_i\neq 0$ for any $i\in\mathbb N$. We use the same notation $b$ for the constant function on $X$ with the value $b$. Set $c=b+1\in C(X,\mathbb B(H))$. It follows from (\ref{norm}) that $\|b\|<1/2$, hence $c$ is invertible in $C(X,\mathbb B(H))$. Set $I=c^{-1}J=\{c^{-1}x:x\in J\}$. Clearly, $I\subset A$, and $I$ is a closed right ideal isomorphic to $J$. As $J$ is thick, $I$ is thick as well. Indeed, if $aI=0$ for some $a\in A$ then $ac^{-1}J=0$, hence $ac^{-1}=0$, which implies $a=0$.

\begin{lem}
The ideal $I\cap J\subset A$ is not thick.
\end{lem}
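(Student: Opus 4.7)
The plan is to describe $I\cap J$ explicitly, after which a nonzero left annihilator becomes transparent.

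First I would translate membership in $I\cap J$ into simpler conditions. Since $I=c^{-1}J$ and $c=1+b$, an element $x\in A$ lies in $I\cap J$ iff $x\in J$ and $bx=cx-x\in J$. The operator $b$ has matrix supported in its first column, with entries $b_{i1}=\eta_i$, so the matrix product simplifies to $(bx)_{ij}(t)=\eta_i x_{1j}(t)$, involving only the first row of $x$. Applying Lemma \ref{lines} to $bx$ demands $\eta_i x_{1j}(t_i)=0$ for all $i,j\in\mathbb N$; since each $\eta_i\neq 0$, this forces $x_{1j}(t_i)=0$ for every $i,j$. Density of $\{t_i\}_{i\in\mathbb N}$ in $X$ and continuity of $x_{1j}$ then yield $x_{1j}\equiv 0$ on all of $X$. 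Conversely, any $x\in J$ with zero first row trivially satisfies $bx=0\in J$, so $I\cap J$ is precisely the set of elements of $J$ whose first row is identically zero.

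With this description, I would take $a\in A$ to be the constant function equal to the rank-one projection $p_1$ onto $e_1$. Then $a\neq 0$, and for any $x\in I\cap J$ the matrix entries of the product satisfy $(ax)_{ij}=\delta_{i1}x_{1j}$, which vanish for $i\geq 2$ trivially and for $i=1$ by the first-row vanishing just established. Hence $a(I\cap J)=0$ while $a\neq 0$, so $I\cap J$ is not thick.

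No serious obstacle is anticipated. The only step requiring any thought is the recognition that $bx\in J$ forces a condition on the \emph{first row} of $x$, rather than the diagonal-type condition that characterises $J$ itself; this is a direct consequence of the very degenerate form of $b$, whose matrix is nonzero only on a single column. Once the first row of every element of $I\cap J$ is seen to vanish identically, the projection $p_1$ is the obvious left annihilator.
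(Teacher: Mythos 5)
Your proof is correct and follows essentially the same route as the paper: the paper also derives $y_{ij}-x_{ij}=\eta_i x_{1j}$ from $y=cx$, concludes that the first row of every element of $I\cap J$ vanishes at each $t_i$ and hence identically, and then left-multiplies by the matrix unit $e_{11}=p_1$. Your reformulation via $bx=cx-x\in J$ and the explicit characterisation of $I\cap J$ is a slightly cleaner packaging of the same computation.
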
  
\begin{proof}
Let $x\in I\cap J$. As $x\in I=c^{-1}J$, there exists $y\in J$ such that $c^{-1}y=x$. Write $x$ and $y$ as functions on $X$ taking values in infinite matrices, $x=(x_{ij})$, $y=(y_{ij})$, $i,j\in\mathbb N$, where $x_{ij},y_{ij}\in C(X)$. Recall that, by Lemma \ref{lines}, the functions $x_{kj},y_{kj}$ vanish at $t_k$ for any $k,j\in\mathbb N$.   

As $y=cx$, we have $y_{ij}=\eta_i x_{1j}+x_{ij}$, hence 
\begin{equation}\label{difference}
x_{1j}=\frac{1}{\eta_i}(y_{ij}-x_{ij}).
\end{equation}
As the right hand side in (\ref{difference}) vanishes at $t_i$ for any $j\in\mathbb N$, the left hand side also vanishes at $t_i$. But the only continuous function $x_{1j}$ that vanishes at each $t_i$, $i\in\mathbb N$, must equal zero. Thus, the first line of the matrix that represents an arbitrary element $x\in I\cap J$ is zero. 

Let $z_{11}=1$, $z_{ij}=0$ for any $(i,j)\neq(1,1)$, $z=(z_{ij})_{i,j\in\mathbb N}$. Then $z\in A$. Clearly, $zx=0$ for any $x\in I\cap J$, i.e. $I\cap J$ is not thick. 
\end{proof}

\section{Case of Hilbert $C^*$-modules and their submodules}

The definitions of essential and thick right ideals are standardly generalized to right submodules by replacing (closed) right ideals by (closed) right submodules in modules over a $C^*$-algebra $A$. Let $N$ be a right Hilbert $C^*$-module over a $C^*$-algebra $A$, and let $M\subset N$ be its closed sibmodule. 

Functionals on Hilbert $C^*$-modules give one more way to express the idea that $M$ is large in $N$. Here are three properties that characterize largeness of a submodule in a Hilbert $C^*$-module. 

\begin{itemize} 
\item[$(m1)$]
$M$ is {\it essential} in $N$ if $M\cap K\neq 0$ for any (not neccessarily closed) non-zero submodule $K\subset N$.   
\item[$(m3)$]
$M$ is {\it thick} in $N$ if $M^\perp:=\{n\in N:\langle n,m\rangle=0\ \ \forall m\in M\}=0$. 
\item[$(m5)$]
$M$ is {\it functionally thick} in $N$ if $f|_M=0$ implies $f=0$ for any functional $f$ on $N$. 
\end{itemize} 

In the case when $N=A$ and $M$ is a closed right ideal, $(m1)$ and $(m3)$ coincide with $(i1)$ and $(i3)$, respectively. It is also possible to generalize $(i2)$ and $(i4)$ to submodules in an obvious way, but we do not pursue this here.   

\begin{thm}
One has $(m1)\Rightarrow (m5)\Rightarrow (m3)$, and all three properties are different. 
\end{thm}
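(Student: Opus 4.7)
Plan.

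The proof splits into two implications and two strict separations.

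First, $(m5)\Rightarrow(m3)$ is immediate from the definitions: if $n\in M^{\perp}$ is nonzero, then $f_n(x):=\langle x,n\rangle$ is a bounded anti-$A$-linear functional on $N$ vanishing on $M$ with $f_n(n)=\langle n,n\rangle\neq 0$, contradicting $(m5)$.

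Second, I would prove $(m1)\Rightarrow(m5)$ by reducing to Proposition~\ref{Implication} via the auxiliary closed right ideals $(M{:}n):=\{a\in A:na\in M\}$ for $n\in N$. The key step is that $(M{:}n)$ is topologically essential in $A$: given a nonzero closed right ideal $I\subset A$, either $nI=0$, in which case $I\subset\{a\in A:na=0\}\subset(M{:}n)$ so $I\cap(M{:}n)=I\neq 0$, or $nI$ is a nonzero (not necessarily closed) submodule of $N$ and the algebraic essentiality $(m1)$ yields $a\in I$ with $na\in M$, $na\neq 0$, giving $a\in I\cap(M{:}n)\setminus\{0\}$. By Proposition~\ref{Implication}, $(M{:}n)$ is then thick. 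Now if $h\in N'$ vanishes on $M$, for every $a\in(M{:}n)$ we have $a^{*}h(n)=h(na)=0$; taking adjoints, $h(n)^{*}\cdot(M{:}n)=0$, and thickness forces $h(n)^{*}=0$. Hence $h(n)=0$ for every $n\in N$, i.e.\ $h=0$.

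Third, for $(m5)\not\Rightarrow(m1)$ I would reuse the ideal $J$ from Section~2, now viewed as a closed submodule of the Hilbert $A$-module $N:=A=C([0,1],\mathbb{K})$. Non-essentiality is witnessed by the submodule $I=qA$ already constructed. For functional thickness, identify $N'$ with the multiplier algebra $M(A)$ via the standard approximate-unit argument: every $h\in N'$ has the form $h(a)=a^{*}c$ for a unique $c\in M(A)$, so $h|_{J}=0$ translates to $c^{*}\cdot J=0$. A pointwise analysis parallel to the proof that $J$ is thick then shows $c=0$: for irrational $t$ the value $x(t)\in\mathbb{K}$ is unconstrained, forcing $c^{*}(t)=0$; at $t=t_{i_0}$ only the $i_0$-th row of $x(t_{i_0})$ must vanish, so testing against matrix units yields $c^{*}(t_{i_0})e_j=0$ for $j\neq i_0$, and continuity of $c^{*}\cdot e_{i_0,i_0}$ at $t_{i_0}$, with values $0$ on the dense set of irrationals, forces $c^{*}(t_{i_0})e_{i_0}=0$. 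For $(m3)\not\Rightarrow(m5)$ I would invoke the construction in \cite{Kaad-Skeide}: a thick Hilbert submodule $M\subset N$ admitting a nonzero functional in $N'\setminus j(N)$ that vanishes on $M$, arising because $N'$ can genuinely exceed $j(N)$ when $A$ is not monotone complete.

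The main obstacle I anticipate is the verification of $(m5)$ in the Section~2 example: identifying $N'$ with $M(A)$ and controlling the pointwise matrix products uses the specific structure of multipliers of $C([0,1],\mathbb{K})$. By contrast, the abstract implication $(m1)\Rightarrow(m5)$ is quite short once the right ideals $(M{:}n)$ are isolated and Proposition~\ref{Implication} is invoked.
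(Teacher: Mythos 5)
Your two implications are proved exactly as in the paper: $(m5)\Rightarrow(m3)$ via the functional $f_n=\langle\cdot,n\rangle$ attached to a nonzero $n\in M^\perp$, and $(m1)\Rightarrow(m5)$ via the ideals $J_n=\{a\in A:na\in M\}$, their (topological) essentiality, Proposition \ref{Implication}, and the computation $h(n)^*J_n=0$. The citation of \cite{Kaad-Skeide} for $(m3)\not\Rightarrow(m5)$ also matches the paper. Where you genuinely diverge is the example separating $(m5)$ from $(m1)$: the paper builds a fresh example inside $N=l_2(C(X))$, taking $M$ to be the sequences $(a_i)$ with $a_i(t_i)=0$, precisely because the dual of $l_2(A)$ has an explicit description by sequences with uniformly bounded partial sums, which makes functional thickness a one-line pointwise check. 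You instead recycle the Section 2 ideal $J\subset A=C([0,1],\mathbb K)$ as a Hilbert submodule of $A$ itself; non-essentiality then comes for free from the already-proved failure of $(i2)$, but you must pay for it by identifying $A'$ with multipliers. That identification is the one place to be careful: the approximate-unit argument gives $h(a)=a^*c$ with $c$ a priori only a \emph{right} multiplier of $A$ (so a bounded $\mathbb B(H)$-valued function with $a^*c\in A$ for all $a$), not necessarily an element of $M(A)$; fortunately your pointwise argument only uses $e_{jj}c\in C(X,\mathbb K)$ and the vanishing of rows off the dense set, so it survives this weakening, and then $h(a)=\lim a^*u_\lambda c=0$ for an approximate unit $u_\lambda$ built from the $e_{jj}$. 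So both routes work; yours is more economical in constructions but leans on structure theory of multipliers of $C(X,\mathbb K)$, while the paper's choice of $l_2(C(X))$ keeps the dual module completely elementary.
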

\begin{proof}
For $n\in N$ set $J_n=\{a\in A:na\in M\}$. Then $J_n\subset A$ is a closed right ideal.

We begin with the following well-known statement, for which we could not find a reference.

\begin{lem}\label{mod-ideal}
If $M\subset N$ is an essential submodule, then $J_n\subset A$ is an essential right ideal for any $n\in N$.

\end{lem}  
\begin{proof}
Let us prove $(i1')$ for $J_n$. Let $a\in A$, $a\neq 0$. Consider $n'=na\in N$. If $n'=0$ then $n'b=0$ for any $b\in A$, and we may choose $b$ to satisfy $ab\neq 0$. If $n'\neq 0$ then, as $M$ is essential, there exists $b\in A$ such that $n'b\in M$, $n'b\neq 0$ (this condition, similar to $(i1')$, is equivalent to $(m1)$). In both cases, there exists $b\in A$ such that $n'b\in M$. This means that $nab\in M$, hence $ab\in J_n$. In both cases we have $ab\neq 0$. 
\end{proof}

By Proposition \ref{Implication}, if $J_n$ is essential then it is thick, i.e. satisfies $(i3)$. This allows to show the implication $(m1)\Rightarrow(m5)$. Indeed, consider a functional $f$ on $N$ such that $f|_M=0$. Take an arbitrary $n\in N$. If $a\in J_n$ then $na\in M$, hence $a^*f(n)=f(na)=0$, hence $f(n)^*a=0$. Then $f(n)^*J_n=0$,a and as $J_n$ is thick, this implies that $f(n)^*=0$, i.e. $f(n)=0$. Thus, we have shown that $(m1)\Rightarrow (m5)$.

Now let us show that $(m5)\Rightarrow(m3)$. If $(m3)$ does not hold then there exists $n\in N$ such that $n\neq 0$ and $\langle n,m\rangle=0$ for any $m\in M$, i.e. $n\perp M$. Set $f_n(x)=\langle x,n\rangle$, $x\in N$. Then $f_n$ is clearly a functional on $N$, and $f_n(n)=\langle n,n\rangle\neq 0$ as $n\neq 0$, hence $f\neq 0$ and $f|_M=0$.

Let us construct an example of a functionally thick submodule that is not essential. Let $A=C(X)$, let $N=l_2(A)$ be the standard Hilbert $C^*$-module over $A$, i.e. $N$ is the set of sequences $(a_i)_{i\in\mathbb N}$, $a_i\in A$, such that $\sum_{i\in\mathbb N}a_i^*a_i$ is convergent in $A$, and let $M\subset N$ consists of sequences $(a_i)_{i\in\mathbb N}$ with the property that $a_i(t_i)=0$.  

\medskip\noindent
{\bf Claim 1.} {\it $M$ is not essential in $N$.} 
\begin{proof}
It suffices to provide some $n\in N$ such that $na\notin M$ for any $a\neq 0$. Set $n=(\xi_i)_{i\in\mathbb N}$, where $\xi_i$ are constant functions, and $\xi_i\neq 0$ for any $i\in\mathbb N$. If $\sum_{i\in\mathbb N}|\xi_i|^2<\infty$ then $n\in N$. If $na\in M$ for some $a\in A$ then $\xi_ia(t_i)=0$, which means that $a(t_i)=0$ for any $i\in\mathbb N$. As the set $\{t_i\}_{i\in\mathbb N}$ is dense in $X$, we conclude that $a=0$.
\end{proof}

\noindent
{\bf Claim 2.} {\it $M$ is functionally thick.} 
\begin{proof}
It is known that any functional $f$ on $N=l_2(A)$ is given by a sequence $f=(f_i)_{i\in\mathbb N}$, $f_i\in A$, such that the partial sums $\sum_{i=1}^k|f_i|^2$ are uniformly bounded. If $f\neq 0$ then there exists $i\in\mathbb N$ and $t_0\in X$ such that $f_i(t_0)\neq 0$. As before, we may assume that the point $t_0$ differs from the points $t_j$, $j\in\mathbb N$. Let $a_i\in A$ satisfy $a_i(t_0)\neq 0$, and let $a_j=0$ for any $j\neq i$. Then $m=(a_i)_{i\in\mathbb N}\in M$. Suppose that $f|_M=0$. Then $0=f(m)=\bar f_ia_i$, but at $t_0$ this equality fails. 
\end{proof}

An example of a thick submodule that is not functionally thick was constructed in \cite{Kaad-Skeide}. 
\end{proof}

For any Hilbert $C^*$-module $M$ there is a natural supermodule, the second dual Hilbert $C^*$-module $M''$. It follows from the arument preceeding Theorem 2.4 in \cite{Paschke} that $(M'')'=M'$, i.e. $M$ and $M''$ have the same sets of functionals. Therefore, any functional on $M''$ vanishing on $M$ is zero, hence $M$ is functionally thick in $M''$ for any Hilbert $C^*$-module $M$. We do not know whether it is essential, but in all known examples it is.

\section{(Non)transitivity of large Hilbert $C^*$-submodules}

We have already seen that if two submodules, $M$ and $K$, in a Hilbert $C^*$-module satisfy $(m1)$ then their intersection $M\cap K$ also satisfies the same property, but for $(m3)$ this need not be true. Here we address the question about transitivity of $(m1)$, $(m3)$ and $(m5)$: for $K\subset M\subset N$, if $K$ satisfies one of these conditions in $M$, and $M$ satisfies the same condition in $N$, does $K$ satisfy the same property in $N$?

\begin{prop}
Let $K$ satisfy $(m1)$ (resp., $(m5)$) in $M$, and let $M$ satisfy $(m1)$ (resp., $(m5)$) in $N$. Then $K$ satisfies $(m1)$ (resp., $(m5)$) in $N$. 

\end{prop}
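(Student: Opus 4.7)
The plan is to handle the two cases separately, both of which should reduce to a short chase once we set things up properly. Neither case looks hard; the main thing is just to make sure the objects we intersect or restrict live in the right category.

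For the $(m1)$ case, I would take an arbitrary nonzero submodule $L\subset N$ and aim to produce a nonzero element of $K\cap L$. Since $M$ satisfies $(m1)$ in $N$, the intersection $L\cap M$ is a nonzero submodule of $N$, and, crucially, a submodule of $M$. Now I apply $(m1)$ for $K$ in $M$ to this nonzero submodule $L\cap M\subset M$: we get $K\cap(L\cap M)\neq 0$. Since $K\subset M$ we have $K\cap(L\cap M)=K\cap L$, and the conclusion follows. The only detail worth emphasizing is that $(m1)$ as formulated allows non-closed submodules, which is exactly what we need so that $L\cap M$ qualifies as a test submodule at each step.

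For the $(m5)$ case, I would start with a functional $f$ on $N$ with $f|_K=0$ and show $f=0$. The key observation is that the restriction $f|_M$ is a functional on $M$: anti-$A$-linearity in the sense of $f(ma)=a^*f(m)$ is preserved by restriction, and so is boundedness. Since $(f|_M)|_K=f|_K=0$ and $K$ is functionally thick in $M$, we conclude $f|_M=0$. Then, since $M$ is functionally thick in $N$, we get $f=0$.

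I do not foresee a genuine obstacle; the proofs are essentially bookkeeping. The only place to be careful is the verification, in the $(m5)$ step, that restriction of functionals lands in the right space (so that the hypothesis for $K$ in $M$ can actually be applied to $f|_M$), and in the $(m1)$ step the identity $K\cap(L\cap M)=K\cap L$ which uses $K\subset M$. Both are immediate.
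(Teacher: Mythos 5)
Your proof is correct and follows essentially the same route as the paper: for $(m1)$ the associativity of intersection combined with $K\subset M$, and for $(m5)$ the two-step restriction of the functional first to $M$ and then to $K$. No gaps.
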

\begin{proof}
For $(m1)$ this is standard: let $L\subset N$, then $(K\cap M)\cap L=K\cap(M\cap L)$, and as $0\neq P=M\cap L\subset M$, $K\cap P\neq 0$. For $(m5)$ suppose that there exists a functional $f\neq 0$ on $N$ such that $f|_K=0$. Consider its restriction $f|_M$ onto $M$. If $f|_M=0$ then we have a contradiction with $(m5)$ for $M\subset N$. If $f|_M\neq 0$ then we have a contradiction with $(m5)$ for $K\subset M$.
\end{proof}

\begin{thm}
There exists a Hilbert $C^*$-module $N$ over $A=C[0,1]$ and its submodules $K\subset M\subset N$ such that $K$ is thick in $M$, $M$ is thick in $N$, but $K$ is not thick in $N$. 

\end{thm}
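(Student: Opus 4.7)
The plan is to build a graph-style extension on top of the functionally thick submodule from Claim~2 of the previous section. Let $A=C[0,1]$, fix a countable dense subset $\{t_i\}_{i\in\mathbb N}\subset[0,1]$, set $N_0=l_2(A)$, and take
\[
K_0=\{(a_i)\in N_0:a_i(t_i)=0\text{ for every }i\in\mathbb N\},
\]
which is thick (indeed functionally thick) in $N_0$ by Claim~2. Choose $v=(\delta_i)_{i\in\mathbb N}\in N_0$ with each $\delta_i$ a strictly positive real constant and $\sum_i\delta_i^2<\infty$. The essential property of this $v$ is that $av\in K_0$ for some $a\in A$ forces $a=0$: indeed, $av\in K_0$ means $a(t_i)\delta_i=0$ for every $i$, so $a$ vanishes on the dense set $\{t_i\}$ and hence is zero by continuity. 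Now set
\[
N=N_0\oplus A,\qquad K=K_0\oplus 0,\qquad M=K+A\cdot(v,1).
\]

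I would first verify that $M$ is a closed $A$-submodule of $N$. The map $(k_0,a)\mapsto(k_0+av,a)$ is an $A$-linear bijection $K_0\oplus A\to M$ whose inverse is continuous (read off the second coordinate, then subtract $av$), so $M$ is the image of a closed Hilbert module under a continuous isomorphism. The three required properties then follow by direct inner-product calculations. For non-thickness of $K$ in $N$: the element $(0,1)\in N$ is non-zero and $\langle(0,1),(k,0)\rangle=0$ for every $k\in K_0$. For thickness of $M$ in $N$: if $(n,b)\perp M$, then pairing with $(k,0)$ for arbitrary $k\in K_0$ forces $n\in K_0^{\perp}=0$, after which pairing with $(v,1)$ forces $b=0$. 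For thickness of $K$ in $M$: if $(k_0+a_0v,a_0)\in M$ is orthogonal to every $(k,0)\in K$, then $k_0+a_0v\in K_0^{\perp}=0$, so $a_0v=-k_0\in K_0$, and the essential property of $v$ gives $a_0=0$, whence $k_0=0$.

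The main design obstacle is producing $v$: the entire argument hinges on finding an element of $N_0$ that is $A$-linearly disjoint from a thick submodule in the strong sense $Av\cap K_0=0$. In ambient modules of small rank (e.g.\ $A$ itself, or $A^n$ with the natural thick submodules) no such element exists, so the infinite-dimensional $l_2(A)$ is essential. Once one realizes that $l_2(A)$ accommodates an all-coordinates-nonzero constant vector $v$ whose values $\delta_i$ at the points $t_i$ simultaneously witness the density of $\{t_i\}$, everything else becomes bookkeeping between the two coordinates of $N=N_0\oplus A$.
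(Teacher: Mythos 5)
Your proof is correct, but it takes a genuinely different route from the paper. The paper keeps $M=l_2(A)$ as the middle module, imports from Kaad--Skeide a pathological functional $f$ on $M$ (one with $fa=j(m)\Rightarrow a=0$), sets $K=\ker f$, and builds $N=M\oplus A$ with a \emph{twisted} inner product $\langle m+na,m'+na'\rangle_N=\langle m,m'\rangle+a^*f(m')^*+f(m)a'+a^*a'$; it must then verify positivity via a Cauchy inequality, prove completeness of $N$, and use the pathological property of $f$ to show $K$ is thick in $M$. You instead stay inside the \emph{standard} orthogonal sum $N=l_2(A)\oplus A$ and take $K=K_0\oplus 0$, where $K_0$ is exactly the thick-but-not-essential submodule from Claims 1 and 2 of the preceding section, and $M$ is the graph-like module $K+A\cdot(v,1)$ with $v$ the vector witnessing non-essentiality of $K_0$ (i.e.\ $Av\cap K_0=0$). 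Your three verifications are sound: $(0,1)\perp K$ kills thickness of $K$ in $N$; thickness of $K_0$ in $N_0$ plus the unit in the second coordinate gives thickness of $M$ in $N$; and thickness of $K_0$ in $N_0$ combined with $Av\cap K_0=0$ gives thickness of $K$ in $M$ (every element of $M$ does have the form $(k_0+a_0v,a_0)$ uniquely, since your topological-isomorphism argument shows the algebraic sum is already closed). What your approach buys is economy: no twisted sesquilinear form, no positivity or completeness checks, and no reliance on the Kaad--Skeide functional --- only on the elementary example already constructed in the paper. What the paper's approach buys is a conceptual link between this phenomenon and non-representable functionals: its $N$ is precisely $M$ with the offending functional adjoined as an inner-product-representing vector, which is thematically tied to the later sections on extending inner products. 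Both constructions ultimately exploit the same mechanism --- a submodule that is thick but not essential, with the non-essentiality witness promoted to an actual orthogonal vector in a larger module.
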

\begin{proof}
Let $M=l_2(A)$ be the standard Hilbert $C^*$-module over $A$. Note that for any $m\in M$, the formula $f_m(x)=\langle x,m\rangle$ defines a functional $f_m=j(m)$ on $M$. Let $f$ be a functional on $M$. If there exists $m\in M$ such that $f_m=f$ then we may identify $f$ with $m$. In \cite{Kaad-Skeide}, and then in \cite{M-MN}, a functional $f$ on $M$ was constructed such that if $fa=j(m)$ for some $m\in M$ then $a=0$. We assume that $\|f\|<1$. Also without loss of generality we may assume that there exists $m'\in M$ such that $f(m')=1$. Indeed, both $f$ and $m'$ are sequences of elements of $A$, $f=(f_i)$, $m'=(m'_i)$, $i\in\mathbb N$, with certain convergence properties, and we may change $f_1$ and $m'_1$ to provide $f(m')=1$. 

Set $N=M\oplus A$, and write $m$ for $(m,0)$ and $n$ for $(0,1)$. Then any element of $N$ can be written as $m+na$, $m\in M$, $a\in A$. Define a sesquilinear form on $N$ by
$$
\langle m+na,m'+na'\rangle_N:=\langle m,m'\rangle+a^*f(m')^*+f(m)a'+a^*a',   
$$ 
$m,m'\in M$, $a,a'\in A$.
It follows from the Cauchy inequality for functionals, $f(m)f(m)^*\leq\|f\|^2\langle m,m\rangle$, that 
\begin{eqnarray*}
a^*f(m)^*+f(m)a&\geq& -(a^*a+f(m)f(m)^*)\\
&\geq& -(a^*a+\|f\|^2\langle m,m\rangle), 
\end{eqnarray*}
therefore 
\begin{eqnarray}\label{eqKoshi}
\langle m+na,m+na\rangle_N&=&\langle m,m\rangle+a^*f(m)^*+f(m)a+a^*a\nonumber\\
&\geq&(1-\|f\|^2)\langle m,m\rangle\ \geq \  0.
\end{eqnarray}
If $\langle m+na,m+na\rangle_N=0$ then $\langle m,m\rangle=0$, which implies also $a^*a=0$, hence $m+na=0$. Thus, $N$ is a pre-Hilbert $C^*$-module. 

\begin{lem}
$N$ is complete with respect to the norm defined by the inner product $\langle\cdot,\cdot\rangle_N$.

\end{lem}
\begin{proof}
Let $(m_i+na_i)_{i\in\mathbb N}$ be a Cauchy sequence. Then, by (\ref{eqKoshi}),
$$
(1-\|f\|^2)\langle m_i-m_j,m_i-m_j\rangle\leq \langle m_i+na_i-m_j-na_j,m_i+na_i-m_j-na_j\rangle_N
$$
vanishes as $i,j\to\infty$, hence $(m_i)_{i\in\mathbb N}$ is a Cauchy sequence in $M$. Then $(a_i)_{i\in\mathbb N}$ is a Caushy sequence in $A$.
\end{proof}

Clearly, $M$ is thick in $N$. Indeed, suppose that there exists $n'\in N$ such that $\langle n',m\rangle_N=0$ for any $m\in M$. Then $n'=m'+na$ for some $m'\in M$ and $a\in A$, and $f_{n'}=f_{m'}+fa$ is a functional on $M$. We have 
$$
0=\langle n',m\rangle_N=\langle m'+na,m\rangle_N=\langle m',m\rangle+a^*f(m)=(f_{m'}+fa)(m),
$$  
hence $f'(m)=0$ for any $m\in M$, where $f'=f_{m'}+fa$. Then $f'=0$, hence $n'=0$. 

Define the submodule $K\subset M$ by setting $K=\{m\in M:f(m)=0\}$. 

\begin{lem}
$K$ is thick in $M$.

\end{lem}
\begin{proof}
Note that $f(m')=1$ implies that $M=m'A\oplus K$ (the sum is a direct sum, but non-orthogonal one). Indeed, let $m\in M$. Write $a=f(m)\in A$. Then $f(m-m'a)=a-f(m')a=0$, hence $m-m'a\in K$, and $m=m'a+(m-m'a)$. This decomposition is unique: if $m=m'b+k$ for some $b\in A$, $k\in K$, then $m'(b-a)\in K$, i.e. $f(m')(b-a)=b-a=0$.

Suppose that there exists $m\in M$ such that $m\perp K$. Let $b=f_m(m')=\langle m',m\rangle\in A$. Then $(fb-f_m)(m')=f(m')b-b=0$. The assumption $m\perp K$ implies that $(fb-f_m)(k)=0$ for any $k\in K$. Then $fb-f_m=0$ on the whole $M=m'A\oplus K$, i.e. $fb=f_m=j(m)$. This implies $b=0$, hence $m=0$. 
\end{proof}

For any $k\in K$ we have $\langle k,n\rangle_N=f(k)=0$, hence $n\perp K$, and $K$ is not thick in $N$. 
\end{proof}

\section{Right ideals of compact operators on Hilbert $C^*$-modules related to a submodule}

 Let $\mathbb K(N)$ denote the $C^*$-algebra of compact operators on the Hilbert $C^*$-module $N$. Recall that, given $x,y\in N$, the elementary operator $\theta_{x,y}$ is defined by $\theta_{x,y}(z)=x\langle y,z\rangle$, and $\mathbb K(N)$ is the norm closure of the span of elementary operators in the Banach space of all bounded maps from $N$ to $N$. 

For a closed submodule $M\subset N$ set $J_M=\{T\in\mathbb K(N):\Ran T\subset M\}$. Clearly, $J_M$ is a closed right ideal in $\mathbb K(N)$. In this section we show that $(m1)$ (resp., $(m3)$) for $M$ in $N$ is equivalent to $(i1)$ (resp., $(i3)$) for $J_M$ in $\mathbb K(N)$.

\begin{thm}Let $M\subset N$ be a closed submodule. \\
1. The following conditions are equivalent:
\begin{itemize}
\item
$M$ is essential in $N$;
\item 
$J_M$ is essential in $\mathbb K(N)$.
\end{itemize}
2. The following conditions are equivalent:
\begin{itemize}
\item
$M$ is thick in $N$;
\item 
$J_M$ is thick in $\mathbb K(N)$.
\end{itemize}

\end{thm}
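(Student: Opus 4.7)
The plan is to translate between the module-theoretic conditions on $M\subset N$ and the ideal-theoretic conditions on $J_M\subset\K(N)$ via the identity $T\theta_{x,y}=\theta_{T(x),y}$, valid for $T\in\K(N)$ and $x,y\in N$, together with the observation that $\theta_{m,y}\in J_M$ for every $m\in M$ and $y\in N$ (since its range lies in $mA\subset M$).

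For the thickness equivalence, to show that $M$ thick implies $J_M$ thick, I would start with $S\in\K(N)$ satisfying $SJ_M=0$. Feeding in $T=\theta_{m,y}$ yields $\theta_{S(m),y}=0$; specializing $y=z=S(m)$ gives $S(m)|S(m)|^2=0$, hence $\|S(m)\|^3=0$, so $S|_M=0$. The key step is then to exploit adjointability of $S\in\K(N)$: for every $n\in N$ and $m\in M$ one has $\langle S^*(n),m\rangle=\langle n,S(m)\rangle=0$, so $S^*(n)\in M^\perp$, and thickness of $M$ forces $S^*=0$, hence $S=0$. For the converse I argue contrapositively: if $0\neq n\in M^\perp$, set $S=\theta_{n,n}\neq 0$; then for any $T\in J_M$ and $z\in N$, the fact that $T(z)\in M$ makes $(ST)(z)=n\langle n,T(z)\rangle=0$, so $SJ_M=0$, contradicting thickness of $J_M$.

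For the essentiality equivalence, I would use the per-element reformulation $(i1')$ for ideals and the analogous per-element form for modules employed in Lemma \ref{mod-ideal} (equivalent to $(m1)$ by passing to the cyclic submodule $nA$). To show $M$ essential implies $J_M$ essential, given $T\in\K(N)$ non-zero, pick $n$ with $T(n)\neq 0$ and apply essentiality of $M$ to $T(n)\in N$ to obtain $b\in A$ with $T(n)b\in M$ and $T(n)b\neq 0$. Setting $S=\theta_{nb,T(nb)}\in\K(N)$, the translation identity gives $TS=\theta_{T(nb),T(nb)}$, whose range lies in $T(nb)A\subset M$, so $TS\in J_M$, while $(TS)(T(nb))=T(nb)|T(nb)|^2$ has norm $\|T(nb)\|^3\neq 0$. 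Conversely, given $0\neq n\in N$, set $T=\theta_{n,n}\neq 0$ and apply $(i1')$ to $J_M$ to produce $S\in\K(N)$ with $TS\in J_M$ and $TS\neq 0$; choosing $z_0$ with $(TS)(z_0)\neq 0$ and writing $(TS)(z_0)=n\langle n,S(z_0)\rangle$, the element $a=\langle n,S(z_0)\rangle\in A$ satisfies $na\in M$ and $na\neq 0$.

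The only step that requires more than routine manipulation is the passage from $S|_M=0$ to $S=0$ in the forward direction for thickness: it relies on using adjointability of compact operators to transfer the vanishing condition from $S$ to $S^*$, where the thickness hypothesis on $M$ becomes directly applicable.
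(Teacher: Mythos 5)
Your proof is correct and follows essentially the same route as the paper: the same $\theta$-operator constructions for both directions of both parts, the per-element criteria for essentiality, and adjointability of $S$ to pass from $S|_M=0$ to an element of $M^\perp$. The only (cosmetic) difference is in deriving $S|_M=0$ in the forward thickness direction: you feed $\theta_{m,y}\in J_M$ with arbitrary $y$ and evaluate $\theta_{S(m),S(m)}$ at $S(m)$, while the paper approximates $m$ by $\theta_{m,m}(x)$ using an approximate unit for $C^*(\langle m,m\rangle)$ — both work, and yours is slightly more economical.
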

\begin{proof}
{\bf 1.} Let $M$ be essential in $N$. Let $S\in\mathbb K(N)$ be a non-zero compact operator. Then $\Ran S\subset N$ is a not necessarily closed submodule in $N$, and by essentiality, $\Ran S\cap M\neq 0$, hence there exists $m\in M$ such that $m'=S(m)\in M$ and $S(m)\neq 0$. Consider the product $ST$, where $T=\theta_{m,m'}\in\mathbb K(N)$. As $S\theta_{m,m'}=\theta_{S(m),m'}=\theta_{m',m'}$, we have 
$$
ST(x)=\theta_{m',m'}(x)=m'\langle m',x\rangle\in M
$$ 
for any $x\in N$, hence $ST\in J_M$. To see that it is non-zero, one may take $x=m'$, then $ST(m')=m'\langle m',m'\rangle$ and 
$$
\langle ST(m'),ST(m')\rangle=\langle m',m'\rangle^3\neq 0. 
$$
Therefore, $J_M$ is essential in $\mathbb K(N)$.

In the opposite direction, suppose that $J_M$ is essential in $\mathbb K(N)$. Then for any non-zero $S\in\mathbb K(N)$ there exists $T\in\mathbb K(N)$ such that $ST\in J_M$ and $ST\neq 0$. Take $n\in\mathbb N$, $n\neq 0$, $S=\theta_{n,n}$, and let 
\begin{equation}\label{eq2}
\theta_{n,n}T\in J_M,\quad \theta_{n,n}T\neq 0. 
\end{equation}
Note that the first condition in (\ref{eq2}) means that $n\langle n,T(x)\rangle\in M$ for any $x\in N$, and the second condition in (\ref{eq2}) means that there exists $x\in N$ such that $n\langle n,T(x)\rangle\neq 0$. Set $a=\langle n,T(x)\rangle\in A$. Then the conditions in (\ref{eq2}) can be written as $na\in M$ and $na\neq 0$, which imply essentiality of $M$ in $N$.

{\bf 2.} Let $M$ be thick in $N$, and suppose that $J_M$ is not thick. The latter means that there exists a non-zero $S\in\mathbb K(N)$ such that $ST=0$ for any $T\in\mathbb K(N)$. Note that $J_M$ contains operators of the form $\theta_{m,m}$ for $m\in M$,  defined by $\theta_{m,m}(x)=m\langle m,x\rangle$, hence $S\theta_{m,m}=0$ for any $m\in M$. Also note that for each $m\in M$ and any $\varepsilon>0$ there exist $x\in N$ such that $\|m-\theta_{m,m}(x)\|<\varepsilon$. Indeed, one may consider the $C^*$-subalgebra generated by $\langle m,m\rangle$ in $A$ and take $x=ma$ with $a$ from an approximate unit for this $C^*$-subalgebra. Suppose that $S(m)\neq 0$. Then
$$
S(m)=S(m)-S\theta_{m,m}(x)=S(m-\theta_{m,m}(x)),
$$
hence
$$
\|S(m)\|\leq\|S\|\|m-\theta_{m,m}(x)\|<\|S\|\varepsilon.
$$
As $\varepsilon$ was arbitrary, we have $S(m)=0$ for any $m\in M$. 

As $S\neq 0$, there exists $n\in N$ such that $S(n)\neq 0$. Set $n'=S^*S(n)$. Then $\langle n',n\rangle=\langle S(n),S(n)\rangle\neq 0$, hence $n'\neq 0$. For $m\in M$ we have 
$$
\langle n',m\rangle=\langle S^*S(n),m\rangle=\langle S(n),S(m)\rangle=0, 
$$
hence $n'\perp M$ --- contradiction with thickness of $M$.

In the opposite direction, let $J_M$ be thick, and suppose that $M$ is not thick in $N$, i.e. that there exists $n\in N$ such that $n\neq 0$ and $n\perp M$. Set $S=\theta_{n,n}\in\mathbb K(N)$. Then $S\neq 0$. Let $T\in J_M$. Then $ST(x)=n\langle n,T(x)\rangle=0$ for any $x\in N$, i.e. $SJ_M=0$ for a non-zero $S$, which contradicts thickness of $J_M$.
\end{proof}

\section{Extending the inner product by essential right ideals}

Let $A$ be a $C^*$-algebra, and let $\{J_\gamma\}_{\gamma\in\Gamma}$ be the set of all essential closed right ideals of $A$. Set
$$
M_1=\{f\in M':\exists \gamma\in\Gamma \mbox{\ such\ that\ }fx\in M \ \forall x\in J_\gamma\}.
$$
Clearly, $M\subset M_1\subset M'$. It was shown in \cite{M-MN} that $M_1$ may differ both from $M$ and from $M'$.

\begin{lem}
$M_1$ is a linear subspace of $M'$.

\end{lem}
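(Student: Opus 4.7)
The plan is to reduce everything to the fact, observed at the start of Section~3, that the intersection of two essential closed right ideals of $A$ is again an essential closed right ideal. Once this is in place, the statement becomes a routine verification.

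First I would unpack the right $A$-module structure on $M'$: for $f\in M'$ and $x\in A$, $(fx)(m)=f(m)x$, so for $\alpha_1,\alpha_2\in\mathbb C$ and $f_1,f_2\in M'$ one has $(\alpha_1 f_1+\alpha_2 f_2)x=\alpha_1(f_1 x)+\alpha_2(f_2 x)$ in $M'$ (just evaluate both sides at an arbitrary $m\in M$). The inclusion $M\hookrightarrow M'$ via $j$ is $\mathbb C$-linear, so ``$fx\in M$'' means $fx$ lies in the image $j(M)$, which is a $\mathbb C$-linear subspace of $M'$.

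Next, take $f_1,f_2\in M_1$ with witnessing essential closed right ideals $J_{\gamma_1},J_{\gamma_2}\in\{J_\gamma\}_{\gamma\in\Gamma}$. Set $J:=J_{\gamma_1}\cap J_{\gamma_2}$; this is a closed right ideal of $A$, and it is essential: if $I\subset A$ is any non-zero right ideal, then $J_{\gamma_2}\cap I\neq 0$ by essentiality of $J_{\gamma_2}$, hence $J\cap I=J_{\gamma_1}\cap(J_{\gamma_2}\cap I)\neq 0$ by essentiality of $J_{\gamma_1}$. Therefore $J=J_\gamma$ for some $\gamma\in\Gamma$.

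Finally, for any $x\in J$ both $f_1 x$ and $f_2 x$ lie in $M$, so
$$
(\alpha_1 f_1+\alpha_2 f_2)x=\alpha_1(f_1 x)+\alpha_2(f_2 x)\in M,
$$
which shows $\alpha_1 f_1+\alpha_2 f_2\in M_1$. There is no real obstacle here; the only point requiring attention is the distributivity $(\alpha_1 f_1+\alpha_2 f_2)x=\alpha_1(f_1 x)+\alpha_2(f_2 x)$ in $M'$ and the (easy) essentiality of $J_{\gamma_1}\cap J_{\gamma_2}$, both of which are immediate from the definitions.
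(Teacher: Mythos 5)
Your proof is correct and follows essentially the same route as the paper: take the intersection of the two witnessing essential ideals, note it is again an essential closed right ideal, and use distributivity of the module action. The paper's version is just a terser statement of the same argument (and, as you note, the closure under intersection is already recorded at the start of Section~3).
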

\begin{proof}
Let $f,g\in M_1$. If $fJ_\gamma\subset M$ and $gJ_\delta\subset M$ then $(f+g)(J_\gamma\cap J_\delta)\subset M$. As $J_\gamma\cap J_\delta$ is essential, $f+g\in M_1$.
\end{proof}

Let $f\in M'$, $g\in M_1$, and let $gJ_\gamma\subset M$, $\gamma\in\Gamma$. 
For a closed right ideal $J\subset J_\gamma$ and for $x\in J$, $y\in A$ set 
$$
B^{f,g}(x,y)=(fy)(gx)\in A. 
$$
If $y\in J$ and if $fJ\subset M$ then we have $B^{f,g}(x,y)=\langle gx,fy\rangle$.

If $J=J_\alpha$ for some $\alpha\in\Gamma$ then we write this sesquilinear form as $B^{f,g}_{\alpha}$. Let $\mathcal B(J_\alpha)$ denote the Banach space of all sesquilinear and $A$-sesquilinear maps from $J_\alpha\times A$ to $A$. Then $B_\alpha^{f,g}\in\mathcal B(J_\alpha)$. The map $a\mapsto x^*ay$  defines the map $\iota_\alpha:A\to\mathcal B(J_\alpha)$.   
As $J_\alpha$ is essential, it is thick, hence the map $\iota_\alpha$ is injective.   
If $\alpha,\beta\in\Gamma$ and $J\alpha\subset J_\beta$ then $\iota_\alpha=\iota_\beta\circ r_{\alpha\beta}$, where $r_{\alpha\beta}:\mathcal B(J_\beta)\to\mathcal B(J_\alpha)$ is the restriction map.

\begin{lem}\label{independ}
Let $f,g\in M_1$, $fJ_\gamma\subset M$, $fJ_{\gamma'}\subset M$, $gJ_\delta\subset M$, $gJ_{\delta'}\subset M$, and let $B^{f,g}_{\gamma\delta}$ and $B^{f,g}_{\gamma'\delta'}$ lie in the range of $\iota_{\gamma\delta}$ and $\iota_{\gamma'\delta'}$ respectively, then there exists a unique $a\in A$ such that $\iota_{\gamma\delta}(a)=B^{f,g}_{\gamma\delta}$ and $\iota_{\gamma'\delta'}(a)=B^{f,g}_ {\gamma'\delta'}$. 

\end{lem}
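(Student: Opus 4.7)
The plan is to reduce both hypothesized equalities to a common essential subideal of $A$ and then use that $\iota$ is injective on any essential ideal. I would let $a_1, a_2 \in A$ denote the elements provided by the two hypotheses, so that $\iota_{\gamma\delta}(a_1) = B^{f,g}_{\gamma\delta}$ and $\iota_{\gamma'\delta'}(a_2) = B^{f,g}_{\gamma'\delta'}$; the main task is to show $a_1 = a_2$, after which uniqueness of $a$ in the conclusion is immediate from injectivity of $\iota_{\gamma\delta}$.

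First I would form the intersection $J := J_\gamma \cap J_{\gamma'} \cap J_\delta \cap J_{\delta'}$. As observed at the start of Section 3, the intersection of two essential closed right ideals is again essential; iterating this, $J$ is an essential closed right ideal, hence thick by Proposition \ref{Implication}, and therefore $\iota_J : A \to \mathcal{B}(J)$ is injective. The key point is that the defining formula $B^{f,g}(x,y) = (fy)(gx)$ is intrinsic in $f, g, x, y$ and unaffected by which ambient essential ideal one uses to house $x$: on $J \times A$ both $B^{f,g}_{\gamma\delta}$ and $B^{f,g}_{\gamma'\delta'}$ restrict to the same form $(x,y) \mapsto (fy)(gx)$. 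Writing $r$ for the restriction of forms to $J \times A$ (the evident iterate of the nested compatibility $\iota_\alpha = r_{\alpha\beta} \circ \iota_\beta$), one has $r \circ \iota_{\gamma\delta} = \iota_J = r \circ \iota_{\gamma'\delta'}$. Applying $r$ to the two hypothesized equalities gives $\iota_J(a_1) = \iota_J(a_2)$, and injectivity of $\iota_J$ yields $a_1 = a_2$, as required.

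The only mildly subtle point is notational: one should fix the convention that $B^{f,g}_{\gamma\delta}$ and $\iota_{\gamma\delta}$ refer to the form on, and the embedding into $\mathcal{B}(\cdot)$ of, the essential ideal $J_\gamma \cap J_\delta$, and then verify that the obvious restriction squares commute on the four-fold intersection. Once this bookkeeping is in place the argument is a single clean line combining three ingredients already in the paper: stability of essentiality under finite intersection, the intrinsic (ideal-independent) definition of $B^{f,g}$, and injectivity of $\iota$ on any essential ideal.
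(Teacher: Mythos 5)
Your proposal is correct and follows essentially the same route as the paper: both form the intersection of the relevant essential ideals (your fourfold intersection $J_\gamma\cap J_{\gamma'}\cap J_\delta\cap J_{\delta'}$ coincides with the paper's $J_{\gamma\delta}\cap J_{\gamma'\delta'}$), observe that the two forms restrict to the same form there because the defining formula $(fy)(gx)$ is intrinsic, and conclude from the injectivity of $\iota$ on an essential (hence thick) ideal together with the compatibility of $\iota$ with restrictions. No gaps.
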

\begin{proof}
Note that $J=J_{\gamma\delta}\cap J_{\gamma'\delta'}$ is also essential. As $J\subset J_{\gamma\delta}$ and $J\subset J_{\gamma'\delta'}$, there exist restrictions $r:\mathcal B(J_{\gamma\delta})\to \mathcal B(J)$ and $r':\mathcal B(J_{\gamma'\delta'})\to \mathcal B(J)$, and $r(B^{f,g}_{\gamma\delta})=r'(B^{f,g}_{\gamma'\delta'})$.

Let $a,a'\in A$ satisfy 
$$
\iota_{\gamma\delta}(a)=B^{f,g}_{\gamma\delta},\quad \iota_{\gamma'\delta'}(a')=B^{f,g}_{\gamma'\delta'}.
$$ 
Then $r(\iota_{\gamma\delta}(a))=r'(\iota_{\gamma'\delta'}(a'))$. As $\iota=r\circ\iota_{\gamma\delta}=r'\circ\iota_{\gamma'\delta'}:A\to \mathcal B(J)$ is injective, $a=a'$.
\end{proof}

Set 
$$
M_0=\{f\in M_1:B^{f,g}\in \iota(A)\ \ \forall g\in M_1 \}, 
$$
and let $\widetilde M=\overline M_0$ be the norm closure of $M_0$ in $M'$.
By Lemma \ref{independ}, this does not depend on the choice of essential right ideals for $f$ and $g$.
If $J$ is an essential closed right ideal in $A$ such that $fJ\subset M$, $gJ\subset M$, and if $B^{f,g}=\iota(a)$, $a\in A$,  then $B^{f,g}(x,y)=x^*ay$.

\begin{lem}
$\widetilde M$ is an $A$-module. 

\end{lem}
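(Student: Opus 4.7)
The plan is to show first that $M_0$ is itself closed under the right $A$-action, and then to pass to the norm closure $\widetilde M$ using continuity of the action on $M'$. So fix $f\in M_0$ and $a\in A$; I need to verify (i) $fa\in M_1$, and (ii) $B^{fa,g}\in \iota(A)$ for every $g\in M_1$.

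For (i), choose an essential $J_\gamma$ with $fJ_\gamma\subset M$, and set
$$
J'=\{x\in A:ax\in J_\gamma\}.
$$
This is manifestly a closed right ideal. Its essentiality I would check via the criterion $(i1')$ for $J_\gamma$: for nonzero $c\in A$, either $ac=0$, in which case any $b$ with $cb\neq 0$ works since $a(cb)=0\in J_\gamma$; or $ac\neq 0$, in which case $(i1')$ for $J_\gamma$ yields $b$ with $(ac)b\in J_\gamma$ and $(ac)b\neq 0$, so $cb\in J'$ and $cb\neq 0$. For $x\in J'$ one then has $(fa)\cdot x = f\cdot (ax) \in fJ_\gamma\subset M$, so $fa\in M_1$.

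For (ii), pick $g\in M_1$ with $gJ_\delta\subset M$, and evaluate the form on the essential ideal $J'\cap J_\gamma\cap J_\delta$. The right-module associativity $(fa)\cdot y=f\cdot (ay)$ in $M'$ gives
$$
B^{fa,g}(x,y)=((fa)y)(gx)=(f(ay))(gx)=B^{f,g}(x,ay).
$$
Since $f\in M_0$ and $g\in M_1$, there exists (by Lemma \ref{independ}, unique) $c\in A$ with $B^{f,g}=\iota(c)$, so $B^{f,g}(x,ay)=x^*c(ay)=x^*(ca)y=\iota(ca)(x,y)$. Hence $B^{fa,g}=\iota(ca)\in \iota(A)$ and $fa\in M_0$.

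Finally, the estimate $\|fa\|\leq \|f\|\|a\|$ on $M'$ shows that the right $A$-action is norm-continuous, so the containment $M_0\cdot A\subset M_0$ extends to $\widetilde M\cdot A\subset \widetilde M$; combined with $\widetilde M$ being a norm-closed linear subspace (as the closure of the linear subspace $M_0$), this shows $\widetilde M$ is an $A$-submodule of $M'$. The main obstacle is establishing essentiality of $J'$ in step (i); the remainder is bookkeeping on definitions, with Lemma \ref{independ} ensuring that the scalar $c$ is independent of the choice of essential ideal one restricts to.
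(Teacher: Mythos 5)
Your proof is correct and the core computation is the same as the paper's: $B^{fa,g}(x,y)=B^{f,g}(x,ay)=x^*(ca)y$, hence $B^{fa,g}=\iota(ca)$. You are in fact more complete than the paper, which silently skips the verification that $fa\in M_1$; your observation that $J'=\{x\in A:ax\in J_\gamma\}$ is a closed right ideal that is essential (checked via $(i1')$, splitting into the cases $ac=0$ and $ac\neq 0$) fills that gap correctly, and the final passage from $M_0$ to its norm closure $\widetilde M$ via continuity of the $A$-action is routine and valid.
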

\begin{proof}
It is clear that $\widetilde M$ is a linear space. Let us show that it is closed under multiplication by elements of $A$. If $g\in M_1$ and $B^{f,g}=\iota(b)$ for some $b\in A$ then $B^{f,g}(x,y)=x^*by$, $x\in J$, $y\in A$. Let $a\in A$, then 
$$
B^{fa,g}(x,y)=(fay)(gx)=f(gx)ay=B^{f,g}(x,ay)=x^*b(ay)=x^*(ba)y, 
$$
hence 
\begin{equation}\label{B}
B^{fa,g}=\iota(ba).
\end{equation} 
\end{proof}

For $f,g\in M_0$, set $\langle g,f\rangle=a$, where $\iota(a)=B^{f,g}$. 
\begin{lem}
$\langle\cdot,\cdot\rangle$ is an inner product on the pre-Hilbert $C^*$-module $M_0$.

\end{lem}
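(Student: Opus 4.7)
The plan is to verify in sequence: well-definedness of $\langle g,f\rangle$, right $A$-linearity, conjugate symmetry, and positivity/definiteness.

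Well-definedness of $\langle g,f\rangle=a$ where $\iota(a)=B^{f,g}$ is immediate: existence is built into the definition of $M_0$, uniqueness follows from injectivity of $\iota$ (each $J_\alpha$ is essential hence thick by Proposition \ref{Implication}), and independence of the choice of essential ideals is exactly Lemma \ref{independ}. Right $A$-linearity $\langle g,fa\rangle=\langle g,f\rangle a$ is recorded in equation (\ref{B}); additivity in each slot is routine.

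For conjugate symmetry I would pick essential ideals $J_\gamma,J_\delta$ with $fJ_\gamma,gJ_\delta\subset M$ and pass to the common essential ideal $J=J_\gamma\cap J_\delta$. For $x,y\in J$ both $fy,gx$ lie in $M$, so the defining formula specializes to $B^{f,g}(x,y)=(fy)(gx)=\langle gx,fy\rangle$ and $B^{g,f}(y,x)=\langle fy,gx\rangle=\langle gx,fy\rangle^*$. Expressing both sides through $\iota$ yields $x^*\langle g,f\rangle y=x^*\langle f,g\rangle^* y$ on $J\times J$, and two applications of thickness of $J$ give $\langle f,g\rangle=\langle g,f\rangle^*$.

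Positivity is the main work. For $x\in J$ one has $fx\in M$ and $\langle fx,fx\rangle=x^*\langle f,f\rangle x\geq 0$ directly from the inner product on $M$. Writing $a=\langle f,f\rangle=a^*$, the task is to upgrade $x^*ax\geq 0$ for $x\in J$ to $a\geq 0$ in $A$. This is the step I expect to be the main obstacle: the inequality tests $a$ only on $J$, not on all of $A$. My plan is to exploit property $(i1')$ of the essential ideal $J$: if the negative part $a_-$ were non-zero, $(i1')$ applied to $a_-$ produces $c\in A$ with $a_-c\in J$ and $a_-c\neq 0$; substituting $x=a_-c$ and using $a_-a\,a_-=-a_-^3$ gives $x^*ax=-c^*a_-^3c\geq 0$, while obviously $c^*a_-^3c\geq 0$, so $c^*a_-^3c=0$, hence $a_-^{3/2}c=0$; the standard $C^*$-algebraic fact that $h\geq 0$ and $h^\alpha c=0$ imply $hc=0$ then forces $a_-c=0$, a contradiction. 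Finally, if $\langle f,f\rangle=0$ then $\langle fx,fx\rangle=0$ for every $x\in J$, so $fx=0$, i.e.\ $f(m)x=0$ for every $m\in M$, $x\in J$; thus $f(m)J=0$ for each $m$, and thickness of $J$ forces $f(m)=0$, so $f=0$.
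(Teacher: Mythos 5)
Your proof is correct and follows essentially the same route as the paper: the well-definedness, $A$-linearity and symmetry steps are identical, and positivity is reduced, exactly as in the paper's Lemma \ref{L}, to showing that a selfadjoint $a$ with $x^*ax\geq 0$ for all $x$ in an essential ideal $J$ is positive, by producing $x=a_-c\in J$ with $a_-c\neq 0$ and deriving $c^*a_-^3c=0$. Your finish ($a_-^{3/2}c=0\Rightarrow a_-c=0$ by the standard functional-calculus fact) is a slightly more direct conclusion than the paper's chain through $b=cc^*$, and your definiteness argument ($\langle fx,fx\rangle=0\Rightarrow fx=0$ in $M$, hence $f(m)J=0$) bypasses the paper's appeal to the Cauchy inequality; both variants are valid.
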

\begin{proof}
Clearly, $\langle g,f_1+\lambda f_2\rangle=\langle g,f_1\rangle+\lambda\langle g,f_2\rangle$, when $g,f_1,f_2\in M_0$, $\lambda\in\mathbb C$. 

Let $B^{f,g}=\iota(b)$, $a\in A$. Then $\langle g,f\rangle=b$. By (\ref{B}), $\langle g,fa\rangle=ba$, thus the inner product is $A$-linear with respect to the second argument.   

Let $B^{f,g}=\iota(b)$, $B^{g,f}=\iota(c)$, let $fJ,gJ\subset M$, let $J$ be essential, and let $x,y\in J$. Then 
\begin{equation}\label{B1}
\langle gx,fy\rangle=B^{f,g}(x,y)=x^*by=\iota(b)(x,y),
\end{equation}
\begin{equation}\label{C1}
\langle fy,gx\rangle=B^{g,f}(y,x)=y^*cx=\iota(c)(y,x). 
\end{equation}
It follows from (\ref{B1}) and (\ref{C1}) that 
$$
(\iota(b)(x,y))^*=\iota(c)(y,x).
$$
But
$$
\iota(b^*)(x,y)=x^*b^*y=(y^*bx)^*=(\iota(b)(y,x))^*,
$$
hence $\iota(b^*)(x,y)=\iota(c)(x,y)$, i.e. $x^*b^*y=x^*cy$ for any $x,y\in J$. As $J$ is essential, we have $b^*=c$,
hence $\langle f,g\rangle^*=\langle g,f\rangle$.

In particular, this implies that $c=\langle f,f\rangle$ is selfadjoint for any $f\in M_0$. Let $fJ\subset M$, $J$ essential. Then $x^*cx=\langle fx,fx\rangle$ is positive for any $x\in J$. 

\begin{lem}\label{L}
Let $c\in A$ be selfadjoint, let $J$ be an essential closed right ideal in $A$, and let $x^*cx\geq 0$ for any $x\in J$. Then $c\geq 0$.
\end{lem}
\begin{proof}
Suppose that $c$ is not positive. Then $c=c_+-c_-$ with positive $c_+,c_-$ and $c_+c_-=0$. Suppose that $c_-\neq 0$. Then the right ideal $I=c_-A$ is non-zero. As $J$ is essential, there exists $a\in A$ such that $x=c_-a\in I\cap J$, $x\neq 0$. Then $x^*(c_+-c_-)x=a^*c_-^3a\leq 0$, hence $a^*c_-^3a=0$. Then $c_-^{3/2}a=0$, hence, multiplying by $c_-^{1/2}$, we get $c_-^2a=0$. Then $c_-^2aa^*=0$. Suppose that $c_-aa^*=0$. Then $c_-aa^*c_-=0$, hence $c_-a=0$ --- a contradiction to $x\neq 0$, hence $c_-aa^*\neq 0$. Set $b=aa^*\geq 0$. Thus we have 
\begin{equation}\label{c}
c_-b\neq 0 \quad \mbox{and}\quad c_-^2b=0. 
\end{equation}
The two conditions in (\ref{c}) contradict each other. Indeed, if $c_-^2b=0$ then $(c_-bc_-)^2=(c_-bc_-)(c_-bc_-)=0$, hence, by uniqueness of the positive square root, $c_-bc_-=0$, which implies that $c_-b^{1/2}=0$. Multiplying this by $b^{1/2}$, we get $c_-b=0$.
Thus the assumption $c_-\neq 0$ is wrong. 
\end{proof}

It follows from Lemma \ref{L} that $c=\langle f,f\rangle\geq 0$ for any $f\in M_0$.

Now suppose that $\langle f,f\rangle=0$ for some $f\in M_0$, and let $m\in M$. Let $fJ\subset M$, $x\in J$. Then $\langle m,fx\rangle=f(m)x$. It follows from the Cauchy inequality 
$$
\langle fx,m\rangle\langle m,fx\rangle\leq \|m\|^2\langle fx,fx\rangle=0
$$
that $f(m)x=0$ for any $x\in J$. If $J$ is essential then this implies $f(m)=0$. As $m\in M$ is arbitrary, this means that $f=0$.

\end{proof}

\begin{lem}\label{L2}
Let $c\in A$, $c\geq 0$, let $J$ be an essential closed right ideal in $A$, and let $\sup_{x\in J,\|x\|=1}\|x^*ax\|=1$. Then $\|a\|=1$.

\end{lem}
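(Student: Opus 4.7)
The estimate $\sup_{x\in J,\|x\|=1}\|x^{*}cx\|\le\|c\|$ is automatic from $\|x^{*}cx\|\le\|c\|\|x\|^{2}$, so the content lies in showing that essentiality of $J$ forces the supremum to reach $\|c\|$. I would fix $\varepsilon\in(0,\|c\|)$, set $\alpha=\|c\|-\varepsilon$, and form the spectral cutoff $b=(c-\alpha)_{+}\in A$. Since $c\ge 0$ has $\|c\|\in\sigma(c)$, the element $b$ is nonzero. Essentiality of $J$ entails topological essentiality by Proposition~\ref{Implication}, so $J\cap\overline{bA}\ne 0$; I would choose a nonzero $x$ in this intersection and write $x=\lim_{n}ba_{n}$ with $a_{n}\in A$, setting $y_{n}=ba_{n}$, so that $y_{n}\to x$.

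Because $b$ and $c$ commute in $C^{*}(c)$, the element $bcb-\alpha b^{2}=b^{2}(c-\alpha)$ corresponds under functional calculus to the function $(t-\alpha)_{+}^{2}(t-\alpha)$, which is nonnegative on $\sigma(c)\subset[0,\|c\|]$; hence $bcb\ge\alpha b^{2}$. Conjugating by $a_{n}$ yields the positive inequality
\begin{equation*}
y_{n}^{*}cy_{n}\;\ge\;\alpha\,y_{n}^{*}y_{n},
\end{equation*}
and monotonicity of the norm on positive elements gives $\|y_{n}^{*}cy_{n}\|\ge\alpha\|y_{n}\|^{2}$. Passing to the limit $n\to\infty$ produces $\|x^{*}cx\|\ge\alpha\|x\|^{2}$, so $x/\|x\|\in J$ has unit norm and $\|(x/\|x\|)^{*}c(x/\|x\|)\|\ge\alpha=\|c\|-\varepsilon$. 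Since $\varepsilon>0$ is arbitrary, the supremum is at least $\|c\|$; combined with the trivial upper bound it equals $\|c\|$, and the hypothesis then forces $\|c\|=1$.

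The key subtlety is that $b$ itself typically does not lie in $J$, so essentiality is invoked precisely to pull a nonzero witness $x$ back into $J$ from the right ideal $\overline{bA}$ that $b$ generates. Once such an $x$ is available, the functional-calculus inequality $bcb\ge\alpha b^{2}$ survives conjugation by the approximants $a_{n}$ and delivers the lower bound on $\|x^{*}cx\|/\|x\|^{2}$, which is the whole game.
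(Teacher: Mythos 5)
Your proof is correct, but it takes a genuinely different route from the paper's. The paper establishes the operator inequality $x^*cx\le x^*x$ for \emph{every} $x\in J$ by testing the hypothesis on a left approximate unit $\{u_\lambda\}$ of $J$ (getting $u_\lambda c u_\lambda\le 1$, then $x^*u_\lambda c u_\lambda x\le x^*x$, then passing to the limit), and then feeds the selfadjoint element $1-c$ into Lemma \ref{L} to conclude $1-c\ge 0$, hence $\|c\|\le 1$; this requires working in the unitalization $A^+$ and checking that $J$ stays essential there. You instead use the spectral cutoff $b=(c-\alpha)_+$ with $\alpha=\|c\|-\varepsilon$, invoke essentiality only to produce a nonzero $x\in J\cap\overline{bA}$, and exploit the functional-calculus inequality $bcb\ge\alpha b^2$ to get $\|x^*cx\|\ge\alpha\|x\|^2$. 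Your version avoids both the approximate unit and the unitalization and proves the sharper identity $\sup_{x\in J,\,\|x\|=1}\|x^*cx\|=\|c\|$, whereas the paper's version yields the pointwise bound $x^*cx\le x^*x$ on all of $J$. Two minor remarks: the implication you invoke is $(i1)\Rightarrow(i2)$, which the paper notes as obvious in the introduction, whereas Proposition \ref{Implication} is the implication $(i2)\Rightarrow(i3)$, so your citation is off; and since $bA$ is already a nonzero (not necessarily closed) right ideal (it contains $b^2\neq0$), plain essentiality $(i1)$ hands you a nonzero $x=ba\in J\cap bA$ directly, making the approximating sequence $y_n$ and the passage to the limit unnecessary.
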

\begin{proof}
Clearly, $\|x^*ax\|\leq \|a\|\cdot\|x\|^2$, hence $\|a\|\geq 1$, so the problem is to show the opposite inequality.
Let $\{u_\lambda\}_{\lambda\in\Lambda}$ be a left approximate unit for $J$ (\cite{Blackadar}, II.5.3.3), which means that it is an approximate unit for the $C^*$-subalgebra $J\cap J^*$ and $\lim_\Lambda u_\lambda x-x=0$ for any $x\in J$. 

It follows from the assumption that $x^*ax\leq 1$ for any $x\in J$ with $\|x\|=1$. In particular, this holds for $x=u_\lambda$: $u_\lambda a u_\lambda\leq 1$. Multiplying this by $x$, we get $x^*u_\lambda a u_\lambda x\leq x^*x$. Passing here to the limit, we obtain that $x^*ax\leq x^*x$.

To prove that $\|a\|\leq 1$, consider first the case when $A$ is unital. Then $1-a$ is selfadjoint and $x^*(1-a)x=x^*x-x^*ax\geq 0$ for any $x\in J$. Then, by Lemma \ref{L}, $1-a$ is positive, hence $\|a\|\leq 1$. 

For the non-unital case we have to pass to the unitalization $A^+$ of $A$ and to notice that $J$ is essential in $A^+$ as well. Indeed, let $I$ be a non-zero right ideal in $A^+$. Suppose that $I\cap A=0$ in $A^+$. Then $I$ is the set of all scalars, which is not a right ideal, hence $I'=I\cap A\subset A$ is a non-zero ideal in $A^+$, hence in $A$. Then $I'\cap J$ is non-zero by essentiality of $J$ in $A$. It remains to apply Lemma \ref{L} in $A^+$.   
\end{proof}

For $f\in M_0$, let $\|f\|_{M'}=\sup_{m\in M,\|m\|\leq 1}\|f(m)\|$ be the norm of $f$ in $M'$, and let $\|f\|^2=\|\langle f,f\rangle\|$.

\begin{thm}\label{completion}
$\|f\|_{M'}=\|f\|$ for any $f\in M_0$.

\end{thm}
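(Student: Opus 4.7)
The plan is to prove the two inequalities $\|f\|_{M'}\leq\|f\|$ and $\|f\|\leq\|f\|_{M'}$ separately, using an essential closed right ideal $J\subset A$ with $fJ\subset M$ and the element $a:=\langle f,f\rangle\in A$, for which $\langle fx,fy\rangle = x^{*}ay$ whenever $x,y\in J$.

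For the bound $\|f\|_{M'}\leq\|f\|$, the key identity is $\langle m,fy\rangle = f(m)y$ for $m\in M$ and $y\in J$, which holds because $fy\in M$ and is identified with its image under $j:M\hookrightarrow M'$. Applying Cauchy--Schwarz in $M$ in the form $\langle fy,m\rangle\langle m,fy\rangle\leq\|m\|^{2}\langle fy,fy\rangle$ then produces the pointwise inequality $y^{*}f(m)^{*}f(m)y\leq\|m\|^{2}\,y^{*}ay$ for every $y\in J$. Lemma \ref{L}, applied to the self-adjoint element $\|m\|^{2}a-f(m)^{*}f(m)$, upgrades this to $f(m)^{*}f(m)\leq\|m\|^{2}a$ in $A$, whence $\|f(m)\|^{2}\leq\|m\|^{2}\|a\|=\|m\|^{2}\|f\|^{2}$.

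For the reverse inequality $\|f\|\leq\|f\|_{M'}$, I would exploit that for each $x\in J$ the element $fx$ lies in $M$ and that $j$ is isometric, so $\|fx\|=\|fx\|_{M'}\leq\|f\|_{M'}\|x\|$ since $M'$ is a right $A$-module. Combined with $\|fx\|^{2}=\|\langle fx,fx\rangle\|=\|x^{*}ax\|$, this gives $\|x^{*}ax\|\leq\|f\|_{M'}^{2}\|x\|^{2}$ for every $x\in J$. Lemma \ref{L2}, after the standard rescaling, yields $\|a\|=\sup_{x\in J,\,\|x\|\leq 1}\|x^{*}ax\|$ for positive $a$, and therefore $\|f\|^{2}=\|a\|\leq\|f\|_{M'}^{2}$.

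The main obstacle is the first direction: the Cauchy--Schwarz inequality only produces a relation valid for $y$ inside the essential ideal $J$, and this must be promoted to a global inequality in $A$. This promotion is exactly the service performed by Lemma \ref{L}, whose validity rests on the essentiality of $J$. Once that step is in hand, both halves of the theorem collapse to Cauchy--Schwarz together with the homogeneity form of Lemma \ref{L2}.
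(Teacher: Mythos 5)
Your proof is correct, and the harder inequality $\|f\|\leq\|f\|_{M'}$ is handled essentially as in the paper: both arguments reduce to Lemma \ref{L2} via $\|x^*ax\|=\|\langle fx,fx\rangle\|=\|fx\|^2\leq\|f\|_{M'}^2\|x\|^2$ for $x\in J$ (the paper applies the lemma to $c^2$ and divides by $\|c\|$ at the end; you apply it to $c$ directly after rescaling, which is marginally cleaner). Where you genuinely diverge is the inequality $\|f\|_{M'}\leq\|f\|$. The paper gets it for free from the abstract Cauchy--Schwarz inequality for the already-established inner product on $M_0$: since $\langle m,f\rangle=f(m)$ for $m\in M\subset M_0$, one has $\|f\|=\sup_{\|g\|\leq 1}\|\langle g,f\rangle\|\geq\sup_{\|m\|\leq 1}\|f(m)\|=\|f\|_{M'}$. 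You instead rederive it by hand: Cauchy--Schwarz in $M$ applied to $m$ and $fy$ gives $y^*f(m)^*f(m)y\leq\|m\|^2\,y^*ay$ for all $y\in J$, and Lemma \ref{L} promotes this to the operator inequality $f(m)^*f(m)\leq\|m\|^2 a$ in $A$. This is more work than the paper's one-liner, but it is sound (the relevant element is self-adjoint, and essentiality of $J$ is exactly what Lemma \ref{L} needs) and it yields a slightly stronger pointwise statement than the norm inequality alone. Both routes ultimately rest on the same two lemmas about essential ideals, so I would call this a legitimate variant rather than a new proof.
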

\begin{proof}
Let $c=\langle f,f\rangle$. Then 
$$
\|fx\|^2=\|\langle fx,fx\rangle\|=\|x^*cx\|\leq \|x\|^2\|c\|.
$$
By Lemma \ref{L2},
\begin{eqnarray*}
\|c\|^2&=&\sup_{x\in J,\|x\|=1}\|x^*c^2x\|\\
&=&\sup_{x\in J,\|x\|=1}\|x^*c\|^2\\
&=&\sup_{x\in J,\|x\|=1}\|x^*\langle f,f\rangle\|^2\\
&=&\sup_{x\in J,\|x\|=1}\|\langle fx,f\rangle\|^2\\
&=&\sup_{x\in J,\|x\|=1}\|f(fx)\|^2\\
&\leq& \|f\|_{M'}^2\sup_{x\in J,\|x\|=1}\|fx\|^2 \\
&\leq&  
\|f\|_{M'}^2\|c\|,
\end{eqnarray*}
whence $\|f\|^2=\|c\|\leq \|f\|_{M'}^2$.

On the other hand, as $M\subset M_0$,
\begin{eqnarray*}
\|f\|&=&\sup_{g\in M_0,\|g\|\leq 1}\|\langle g,f\rangle\|\geq \sup_{m\in M,\|m\|\leq 1}\|\langle m,f\rangle\|\\
&=&
\sup_{m\in M,\|m\|\leq 1}\|f(m)\|\ =\ \|f\|_{M'}.
\end{eqnarray*}
\end{proof}

\begin{thm}
$\widetilde M\subset M'$ is a Hilbert $C^*$-module over $A$ with the inner product $\langle\cdot,\cdot\rangle$ that extends the inner product on $M$. This extension of the inner product is unique. 

\end{thm}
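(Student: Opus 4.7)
The plan is to extend $\langle\cdot,\cdot\rangle$ from $M_0$ to its $M'$-closure $\widetilde M$ by continuity, verify the Hilbert $C^*$-module axioms on $\widetilde M$, check compatibility with the original inner product on $M$, and establish uniqueness via the injectivity of the maps $\iota$.

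First, Theorem~\ref{completion} implies that on $M_0$ the $M'$-norm coincides with the norm $\|f\|=\|\langle f,f\rangle\|^{1/2}$ induced by the inner product. Hence $\widetilde M=\overline{M_0}$ is precisely the Hilbert-module completion of the pre-Hilbert $C^*$-module $(M_0,\langle\cdot,\cdot\rangle)$. Joint continuity of $\langle\cdot,\cdot\rangle$ on $M_0$ (via Cauchy--Schwarz) together with continuity of the $A$-action allows a unique extension of the inner product to $\widetilde M\times\widetilde M$. Sesquilinearity, right $A$-linearity, the Hermitian identity, and positivity all pass to the limit; definiteness follows since $\|f\|^2=\|\langle f,f\rangle\|$. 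Thus $\widetilde M$ becomes a Hilbert $C^*$-module over $A$.

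Next, identify $m\in M$ with $j(m)\in M_0$ and choose any essential right ideal $J\subset A$. Using the identities $j(m_1)y=j(m_1y)$ and $j(m_2)x=j(m_2x)$ in $M'$, one computes for $x,y\in J$ that $B^{j(m_1),j(m_2)}(x,y)=\langle m_2x,m_1y\rangle=x^*\langle m_2,m_1\rangle y=\iota(\langle m_2,m_1\rangle)(x,y)$. Hence $\langle j(m_2),j(m_1)\rangle=\langle m_2,m_1\rangle$, so the extended inner product restricts to the original one on $M$.

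For uniqueness, let $\langle\cdot,\cdot\rangle'$ be any Hilbert $C^*$-module inner product on $\widetilde M$ extending that on $M$. For $f,g\in M_0$ pick an essential right ideal $J$ with $fJ,gJ\subset M$ (the intersection of essential ideals for $f$ and $g$). For $x,y\in J$, the Hilbert-module axioms for $\langle\cdot,\cdot\rangle'$ combined with $gx,fy\in M$ give $x^*\langle g,f\rangle' y=\langle gx,fy\rangle'=\langle gx,fy\rangle=x^*\langle g,f\rangle y$. Since $J$ is essential and hence thick by Proposition~\ref{Implication}, $\iota$ is injective, so $\langle g,f\rangle'=\langle g,f\rangle$ on $M_0$. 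The two inner products therefore induce the same norm on the dense subspace $M_0$, namely the $M'$-norm; since $\widetilde M$ is the $M'$-closure of $M_0$ and each inner product is jointly continuous in its own induced norm, the agreement propagates to all of $\widetilde M$.

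I expect the delicate step to be the final one of propagating uniqueness from $M_0$ to $\widetilde M$: two a priori distinct Banach space structures on $\widetilde M$ must be reconciled, using that they restrict to a common norm on the dense subspace $M_0$ and that $\widetilde M$ is the unique (up to isometry) completion of this common pre-Hilbert module.
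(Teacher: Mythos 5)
Your proposal is correct and follows essentially the same route as the paper: identify $\widetilde M$ with the completion of $(M_0,\langle\cdot,\cdot\rangle)$ via Theorem~\ref{completion}, and prove uniqueness on $M_0$ from the identity $x^*\langle g,f\rangle' y=\langle gx,fy\rangle=x^*\langle g,f\rangle y$ for $x,y$ in an essential (hence thick) ideal $J$, using injectivity of $\iota$. You are in fact slightly more explicit than the paper on two points it leaves implicit — that the extended product restricts to the original one on $M$, and the density argument propagating uniqueness from $M_0$ to $\widetilde M$ — but these are refinements, not a different method.
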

\begin{proof}
 By Lemma \ref{completion}, the completion of the pre-Hilbert $C^*$-module $M_0$ with respect to the norm $\|\cdot\|$ coinsides with its closure in $M'$. 

To prove uniqueness, consider two inner products $\langle\cdot,\cdot\rangle_i$, $i=1,2$, on $\widetilde M$ that coincide on $M$. Let $g,f\in M_0$, $\langle g,f\rangle_i=a_i$. Let $J\subset A$ be an essential closed right ideal such that $fJ,gJ\subset M$. Then $x^*a_1y=\langle gx,fy\rangle=x^*a_2y$ for any $x,y\in J$, i.e. $x^*(a_2-a_1)y=0$. As $J$ is thick, $x^*a_2-x^*a_1=0$ for any $x\in J$. Passing to adjoints, we get $(a_2-a_1)x=0$ for any $x\in J$, whence $a_2=a_1$. 
\end{proof}

\section{Examples}

\begin{example}
Let $\mathbb K(H)\subset A\subset\mathbb B(H)$ be an algebra of bounded operators on a Hilbert space $H$. The two-sided ideal $J=\mathbb K(H)$ of compact operators is essential in $A$. Let $M=l_2(A)$. Then $f=(f_i)_{i\in\mathbb N}\in M'$ if the sums $\sum_{i=1}^j f_i^*f_i$, $j\in\mathbb N$, are uniformly bounded. Then the series $\sum_{i\in\mathbb N}f_i^*f_ix$ is norm convergent for any $x\in J$, hence $fJ\subset M$ for any $f\in M'$, hence $M_1=M'$. For general $A$ we cannot say anything about $M_0$, but when $A=\mathbb B(H)$ then the series $\sum_{i\in\mathbb N}f_i^*f_i$ strongly converges to an element of $A$, hence determines an element of $A$ as a multiplier, and the same holds for the series $\sum_{i\in\mathbb N}f_i^*g_i$ for $g\in M'$. Thus, $M_0=\widetilde M=M'$. 

\end{example}

\begin{example}
Let $A=C(X)$ for a compact Hausdorff space $X$, and let $M=l_2(A)$. An ideal $J\subset C(X)$ is essential if there exists a nowhere dense closed subset $F\subset X$ such that $J=\{f\in C(X):f|_F=0\}$ (see e.g. \cite{Azarpanah}). 
Let $f\in M'$, $f=(f_i)_{i\in\mathbb N}$. Then $f\in M_1$ if the series $\sum_{i\in\mathbb N}|f_i|^2$ converges pointwise on $X\setminus F$, for some nowhere dense subset $F\subset X$, to a continuous function on $X\setminus F$, and $f\in M_0$ if this series converges pointwise on $X\setminus F$ to a function that is continuous on the whole $X$. For example, consider $X=[0,1]$, and define $f_n$ to be the piecewise linear function such that $f_n(x)=0$ for $x\in [0,\frac{1}{n+1}]\cup[\frac{1}{n-1},1]$ and $f_i(\frac{1}{i})=1$. Set $f=(f_i)_{i\in\mathbb N}$, $g=(f_{2i})_{i\in\mathbb N}$. Then $f,g\in M'$, and $f\in \widetilde M$, while $g\notin\widetilde M$. Note that for $A=C[0,1]$ the second dual module $M''$ coincides with $M$ \cite{FMT}, so the inner product extends much further than $M''$.    

\end{example}

\end{document}